\newcommand{\R}{\mathbb{R}}
\newcommand{\Z}{\mathbb{Z}}
\DeclareMathOperator{\Gl}{GL}
\DeclareMathOperator{\Sl}{SL}
\DeclareMathOperator{\UT}{UT}
\DeclareMathOperator{\Or}{O}
\DeclareMathOperator{\Nil}{Nil}
\DeclareMathOperator{\Rm}{Rm}
\DeclareMathOperator{\Ric}{Ric}
\DeclareMathOperator{\Rc}{Rc}
\DeclareMathOperator{\Lie}{Lie}
\DeclareMathOperator{\ad}{ad}
\newcommand{\mfr}[1]{\ensuremath \mathfrak{#1}}
\newcommand{\mcl}[1]{\ensuremath \mathcal{#1}}
\newcommand{\map}[2]{\nolinebreak{#1 \rightarrow #2}}
\newcommand{\mapelts}[2]{\nolinebreak{#1 \mapsto #2}}
\newcommand{\fn}[3]{\nolinebreak{#1 \colon #2 \rightarrow #3}}
\newcommand{\fnl}[3]{\nolinebreak{#1 \colon #2 \longrightarrow #3}}
\newcommand{\fndef}[5]{\begin{align*}\fnla{#1}{#2}{#3} \\ \mapeltsla{#4}{#5} \end{align*}}
\newcommand{\fnla}[3]{#1 \colon #2 &\longrightarrow #3} 
\newcommand{\mapeltsla}[2]{#1 &\longmapsto #2}
\newcommand{\setvbigl}[2]{\nolinebreak{ \left\{ \left. #1\hphantom{,} \right| \hphantom{,}#2 \right\}}}
\newtheorem{thm}{Theorem}[section]
\newtheorem*{thm*}{Theorem}
\newtheorem{cor}[thm]{Corollary}
\newtheorem*{cor*}{Corollary}
\newtheorem{lem}[thm]{Lemma}
\newtheorem*{lem*}{Lemma}
\newtheorem{prop}[thm]{Proposition}
\newtheorem*{prop*}{Proposition}
\theoremstyle{remark}
\newtheorem*{rem}{Remark}
\theoremstyle{definition}
\theoremstyle{definition}
\newtheorem{exmp}[thm]{Example}
\begin{document}

\title{Explicit {R}icci solitons on nilpotent {L}ie groups}

\author[M.~B.~Williams]{Michael Bradford Williams}
\address{Department of Mathematics \\ The University of Texas at Austin}
\email{mwilliams@math.utexas.edu}
\urladdr{http://www.ma.utexas.edu/users/mwilliams/}

\subjclass[2010]{53C44, 22E25}

\begin{abstract}
We consider Ricci flow on two classes of nilpotent Lie groups that generalize the three-dimensional Heisenberg group:~ the higher-dimensional classical Heisenberg groups, and the groups of real unitriangular matrices.  Each group is known to admit a Ricci soliton, but we construct them \textit{explicitly} on each group.  In the first case, this is done using Lott's blowdown method, whereby we demonstrate convergence of arbitrary diagonal metrics to the solitons.  In the second case, which is more complicated, we obtain the solitons using a suitable ansatz.
\end{abstract}


\maketitle
\tableofcontents

\section{Introduction}\label{sec-intro}

The three-dimensional Heisenberg group, also known as $\Nil^3$, is one of Thurston's model geometries.  It is important in understanding the structure of manifolds and Ricci solitons in dimension three, and in understanding Ricci flow as a dynamical system for three-dimensional metric Lie algebras \cite{GlickPayne2009}.  As such, it has been studied extensively: for example, by Isenberg and Jackson \cite{IsenbergJackson1992}, Knopf and McLeod \cite{KnopfMcLeod2001}, Lott \cite{Lott2007}, Baird and Danielo \cite{Baird2007}, and Glickenstein \cite{Glick2008}.  While the behavior of Ricci flow on this Lie group is well-understood, the understanding of solitons on general nilpotent Lie groups (e.g., in higher dimensions) is nascent.

The group $\Nil^3$ is an example of a \textit{nilmanifold}, which is a nilpotent Lie group $N$ together with a left-invariant metric $g$.  We say $g$ is a \textit{nilsoliton} if $g(t) = c(t) \, \eta_t^* g$ is a solution of Ricci flow, for a function $c(t)$ and a one-parameter family of diffeomorphisms $\{\eta_t\}$ of $N$.  The nilsoliton is \textit{expanding} if $c(t) = t$.  In \cite{Lauret2001}, Lauret showed that a metric $g$ on a nilmanifold is a nilsoliton if and only if $\Ric_g = cI + D$, where $\Ric_g$ is the Ricci endomorphism of $g$, $c \in \R$, and $D$ is derivation of the Lie algebra $\mfr{n}$.  Not all nilmanifolds admit nilsolitons, but Lauret also showed (among many other things) that when they exist, they are unique up to isometry and scaling.  More recently, he has shown in \cite{Lauret2010-nil} that solutions of Ricci flow on nilmanifolds exist for all time, and are Type-III: $\|\Rm(g(t))\| \leq C/t$ for $t > 0$.  The methods used involve a flow (equivalent to Ricci flow) on the space of nilpotent Lie brackets, where the algebraic structure is more prominent.

Despite this, there are still very few explicit examples of Ricci solitons on nilpotent Lie groups (to say nothing of Ricci flow solutions in general), and that is the motivation for this paper.  As we will see, Lauret has answered the question, ``Do they exist?''  We focus on the questions, ``What are they?''~ and ``How do they behave?''  Namely, we demonstrate explicit nilsoliton metrics on two classes of nilpotent Lie groups that generalize $\Nil^3$ to higher dimensions.  For one class, we also show that arbitrary diagonal metrics will converge, modulo rescaling, to such solitons.  

\begin{thm}\label{thm:heis} Let $H_N \R$ be the classical Heisenberg group of dimension $N=2n+1$, with coordinates $(x^i)$ and coframe $\{\theta^i\}$ to be described later.  Let $g(t)$ be Ricci flow solution on $H_N \R$, starting at a diagonal left-invariant metric $g_0$.
\begin{itemize}
\item[(a)]\label{heis-a} The solution $g(t)$ has the following asymptotic behavior:
\begin{align*}
g_{ii}      &\sim \gamma_i t^{1/n+2} \\
g_{NN}      &\sim \gamma_N t^{-n/n+2}
\end{align*}
where $i=1,\dots,2n$, and the $\gamma$s are constants depending only on $g_0$ and $n$.

\item[(b)]\label{heis-b} The solution $g(t)$ converges, after pullback by diffeomorphisms, to the solution $g_{\infty}(t)$ corresponding to the metric
\[ g_{\infty} = \theta^1 \otimes \theta^1 + \cdots + \theta^{2n} \otimes \theta^{2n} + \frac{1}{n+2} \, \theta^N \otimes \theta^N. \]
This is a nilsoliton with respect to the diffeomorphisms
\[ \eta_t(x^1,\dots,x^{2n},x^N) = (t^{-\frac{1}{2} \frac{n+1}{n+2}} x^1,\dots,t^{-\frac{1}{2} \frac{n+1}{n+2}} x^{2n},t^{-\frac{n+1}{n+2}} x^N). \]
\end{itemize}
\end{thm}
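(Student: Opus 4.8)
The plan is to reduce the flow to a system of ODEs for the diagonal metric coefficients, integrate that system nearly explicitly, and then read off both (a) and (b). Writing $g = \sum_i a_i\,\theta^i\otimes\theta^i$ with $d\theta^N = \sum_{i=1}^n\theta^i\wedge\theta^{n+i}$ and $d\theta^i=0$ for $i\le 2n$, I pass to the orthonormal frame $e_i = a_i^{-1/2}X_i$, in which the only nonzero brackets are $[e_i,e_{n+i}]=\lambda_i e_N$ with $\lambda_i^2 = a_N/(a_i a_{n+i})$. The standard Ricci formula for a two-step nilpotent metric Lie algebra gives Ricci-endomorphism eigenvalues $-\tfrac12\lambda_i^2$ on each horizontal direction and $\tfrac12\sum_i\lambda_i^2$ on the center, and feeding the associated Ricci tensor into $\partial_t g = -2\,\Rc$ yields
\[
\dot a_i = \frac{a_N}{a_{n+i}},\qquad \dot a_{n+i} = \frac{a_N}{a_i},\qquad \dot a_N = -a_N^2\sum_{i=1}^n\frac{1}{a_i a_{n+i}}.
\]

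The key observation is that this system is almost integrable. One checks $\tfrac{d}{dt}(a_i/a_{n+i})=0$, so each ratio $\rho_i:=a_i/a_{n+i}$ is conserved, while $\tfrac{d}{dt}(a_i a_{n+i})=2a_N$ \emph{independently of} $i$; hence, with $Q(t)=\int_0^t a_N$, every product satisfies $a_i a_{n+i}=P_i(0)+2Q$. Dividing the $a_N$-equation by $\dot Q=a_N$ and integrating gives the closed form $a_N = K\prod_i(P_i(0)+2Q)^{-1/2}$ for an explicit $K>0$, so the whole system collapses to the single separable equation $\dot Q = K\prod_i(P_i(0)+2Q)^{-1/2}$. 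This yields all-time existence and positivity of the $a_i$, and separating variables shows $t\sim \mathrm{const}\cdot Q^{(n+2)/2}$, i.e. $Q\sim C\,t^{2/(n+2)}$. Differentiating then gives $a_N\sim\gamma_N t^{-n/(n+2)}$ and $a_i=\sqrt{\rho_i(P_i(0)+2Q)}\sim\gamma_i t^{1/(n+2)}$, with $\gamma_i=\sqrt{2C\rho_i}$ and $\gamma_N=2C/(n+2)$, which is exactly part (a).

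For (b) I first certify via Lauret's criterion that $g_\infty$ is a nilsoliton: there $\lambda_i^2=1/(n+2)$ for all $i$, so $\Ric = -\tfrac12\id + D$, where $D$ is the diagonal derivation with eigenvalue $\tfrac{n+1}{2(n+2)}$ on the horizontal block and $\tfrac{n+1}{n+2}$ on the center; these satisfy $d_N=d_i+d_{n+i}$, so $D\in\Der(\mfr n)$, and the $\eta_t$ in the statement are precisely the automorphisms it generates. Substituting the self-similar ansatz $a_i=c_i t^{1/(n+2)}$, $a_N=c_N t^{-n/(n+2)}$ with $c_ic_{n+i}=(n+2)c_N$ into the ODE confirms that $g_\infty(t)=t\,\eta_t^*g_\infty$ solves the flow. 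To compare an arbitrary solution I blow it down by setting $\bar g(t)=t^{-1}(\eta_t^{-1})^*g(t)$, which is constant $\equiv g_\infty$ on the soliton itself; with $\beta=\tfrac12\tfrac{n+1}{n+2}$ the asymptotics of (a) give horizontal coefficients $a_i\,t^{2\beta-1}\to\gamma_i$ and central coefficient $a_N\,t^{4\beta-1}\to\gamma_N$, so $\bar g(t)\to\bar g_\infty:=\sum_i\gamma_i\,\theta^i\otimes\theta^i$. Since the $\gamma$'s obey $\gamma_i\gamma_{n+i}=2C=(n+2)\gamma_N$, the limit $\bar g_\infty$ again satisfies the soliton relation, hence is a nilsoliton, and by Lauret's uniqueness it agrees with $g_\infty$ up to isometry and scaling; this is the asserted convergence after pullback by diffeomorphisms.

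I expect the main difficulty to lie in the asymptotic analysis of the separable equation: passing from the crude growth rate $Q\sim C\,t^{2/(n+2)}$ to genuine convergence of the normalized coefficients in (b) requires controlling the subleading terms of $\prod_i\sqrt{P_i(0)+2Q}$, so that each $a_i\,t^{2\beta-1}$ actually converges to $\gamma_i$ rather than merely having the right order of growth. By contrast, identifying $D$, matching the $\eta_t$, and invoking uniqueness should be routine.
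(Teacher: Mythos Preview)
Your argument is correct and in several places more direct than the paper's. For part (a) the paper derives the same ODE system and the same conserved ratios $g_i/g_{i+n}$, but instead of your reduction to a single separable equation for $Q=\int_0^t a_N$, it uses the conserved products $C_1=g_1\cdots g_n g_N$ and $C_2=g_{n+1}\cdots g_{2n}g_N$ (which are equivalent to your closed form $a_N=K\prod_i(P_i(0)+2Q)^{-1/2}$, since $C_1C_2=a_N^2\prod_i a_ia_{n+i}$) and then proves a small comparison lemma for ODEs with asymptotically equal right-hand sides in order to pass from $g_i$ to the explicit model solution. Your route through $Q$ avoids that lemma entirely.

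For part (b) the strategies diverge more visibly. The paper carries out Lott's blowdown $g_s(t)=s^{-1}g(st)$ and constructs automorphisms $\phi_s$ that depend on the initial data (absorbing the constants $A_i$ and $C$), so that $\phi_s^*g_s(t)$ equals $g_\infty(t)$ identically, with no appeal to uniqueness. You instead normalize by the fixed soliton automorphisms $\eta_t$; your limit $\bar g_\infty=\sum\gamma_i\,\theta^i\otimes\theta^i+\gamma_N\,\theta^N\otimes\theta^N$ still carries the initial-data constants, and you invoke Lauret's uniqueness theorem to identify it with $g_\infty$ up to automorphism and scaling. Both are valid; the paper's version is more explicit (it actually names the diffeomorphisms carrying an arbitrary solution onto $g_\infty$), while yours is shorter but leans on an external result. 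Finally, the difficulty you flag at the end is not really one: from the exact relation $\int_0^Q\prod_i\sqrt{P_i(0)+2q}\,dq=Kt$ one gets $Q/t^{2/(n+2)}\to C$ immediately (e.g.\ by l'H\^opital), and the convergence of $a_i\,t^{-1/(n+2)}$ and $a_N\,t^{n/(n+2)}$ follows at once from your explicit formulas.
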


\begin{thm}\label{thm:uni} Let $\UT_n \R$ be the Lie group of real $n \times n$ unitriangular matrices, with coordinates $(x^{ij})$ and coframe $\{\theta^{ij}\}$ to be described later.  Then the family of diagonal metrics $g(t) = g_{ij,ij}(t) \, \theta^{ij} \otimes \theta^{ij}$, where 
\[ g_{ij,ij}(t) = \frac{1}{n^{j-i-1}} t^{1-2(j-i)/n}, \]
is a Ricci flow solution on $\UT_n \R$.  The metric $g(1)$ is a nilsoliton with respect to the diffeomorphisms $\eta_t$, where
\[ (\eta_t(x))^{ij} = t^{-(j-i)/n} x^{ij}. \]
\end{thm}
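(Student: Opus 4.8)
The plan is to reduce everything to an explicit computation of the Ricci endomorphism of a diagonal left-invariant metric on $\UT_n \R$, and then to verify the flow equation and Lauret's criterion directly. First I would fix the basis $\{X_{ij}\}_{i<j}$ of the Lie algebra $\mfr{n}$ of $\UT_n \R$ dual to the coframe $\{\theta^{ij}\}$, for which the only nonvanishing brackets are $[X_{ij},X_{jl}] = X_{il}$ for $i<j<l$ (all others being zero or antisymmetric partners). Writing a general diagonal metric as $g = \sum_{i<j} a_{ij}\,\theta^{ij}\otimes\theta^{ij}$, the frame $\tilde X_{ij} = a_{ij}^{-1/2}X_{ij}$ is orthonormal, and the structure constants become $[\tilde X_{ij},\tilde X_{jl}] = \lambda_{ijl}\,\tilde X_{il}$ with $\lambda_{ijl}^2 = a_{il}/(a_{ij}a_{jl})$.

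Then I would apply the standard formula for the Ricci tensor of a nilpotent metric Lie algebra in an orthonormal frame,
\[ \Ric(\tilde X_{pq},\tilde X_{pq}) = -\tfrac12\sum_{l>q}\lambda_{pql}^2 - \tfrac12\sum_{i<p}\lambda_{ipq}^2 + \tfrac12\sum_{p<m<q}\lambda_{pmq}^2, \]
and check that all off-diagonal components vanish: each nonzero bracket lands on a single basis vector, so no two distinct $\tilde X_{pq},\tilde X_{rs}$ can be hit by, or produced from, a common bracket. This guarantees that $\Ric$ is diagonal, hence that diagonal metrics stay diagonal under the flow.

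Now I substitute the proposed metric $a_{ij} = n^{-(j-i-1)}\,t^{1-2(j-i)/n}$. The crucial observation, and the real engine of the proof, is that for this choice every $\lambda^2$ collapses to the \emph{same} constant: for any $i<j<l$ one finds $\lambda_{ijl}^2 = a_{il}/(a_{ij}a_{jl}) = 1/(nt)$, independently of the indices, because the exponents of $n$ and of $t$ are each additive in the level $j-i$. Substituting into the Ricci formula and counting the purely combinatorial index ranges $\{l>q\}$, $\{i<p\}$, $\{p<m<q\}$ makes the dependence on $p,q$ telescope, leaving the single-variable eigenvalue
\[ \Ric(\tilde X_{pq},\tilde X_{pq}) = \frac{2(q-p)-n}{2n\,t}, \]
which depends only on the level $d = q-p$. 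The Ricci flow equation $\partial_t a_{pq} = -2\,a_{pq}\,\Ric(\tilde X_{pq},\tilde X_{pq})$ is then a one-line check, since both sides equal $a_{pq}(1-2d/n)/t$.

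For the soliton claim I would evaluate at $t=1$, where the Ricci endomorphism has eigenvalue $d/n - \tfrac12$ on the level-$d$ vectors. Taking $c = -\tfrac12$ and defining $D$ by $D(X_{pq}) = \tfrac{q-p}{n}\,X_{pq}$, the relation $\Ric_{g(1)} = c\,\id + D$ holds, and $D$ is a derivation because the weight $q-p$ is additive under $[X_{ij},X_{jl}] = X_{il}$; Lauret's criterion then identifies $g(1)$ as a nilsoliton. Finally I would check that $\eta_t$, which scales $x^{ij}\mapsto t^{-(j-i)/n}x^{ij}$, is a genuine automorphism of $\UT_n \R$ — this uses exactly that the exponent is additive along the matrix product, so the quadratic terms $\sum_{i<k<j} x^{ik}x^{kj}$ rescale consistently — whence $\eta_t^*\theta^{ij} = t^{-(j-i)/n}\theta^{ij}$ and $g(t) = t\,\eta_t^*g(1)$, exhibiting the expanding soliton with the stated diffeomorphisms. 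The main obstacle is the Ricci computation itself: securing the orthonormal-frame formula, the off-diagonal vanishing, and the index bookkeeping; once the constancy of $\lambda^2$ is noticed, the remaining verifications are routine.
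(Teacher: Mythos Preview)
Your argument is correct, and it reaches the same conclusion as the paper by a cleaner route. The paper computes the Ricci tensor from the general Levi-Civita formula~(\ref{riccomp}) for arbitrary Lie groups, expands it with a computer algebra system, and then simplifies term-by-term under the diagonal assumption to obtain the flow system~(\ref{uni-RF}); it then makes the ansatz $g_{ij}=a_{j-i}t^{1-2(j-i)/n}$ and \emph{solves} for the constants $a_{j-i}$. You instead invoke the nilpotent-specific orthonormal-frame formula for $\Ric$, which bypasses all of that machinery, and your central observation---that the rescaled structure constants satisfy $\lambda_{ijl}^2=a_{il}/(a_{ij}a_{jl})=1/(nt)$ independently of the indices---is exactly what makes the three sums collapse to a pure count and yields the eigenvalue $(2d-n)/(2nt)$ in one line. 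The paper's computation implicitly contains this (each ratio $g_{pj}/g_{pi}$ etc.\ in~(\ref{uni-RF}) equals $a_{j-i}/n$ after substitution), but never isolates it as the structural reason the ansatz works.

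Your off-diagonal vanishing sketch is brief but sound: since every bracket $[\tilde X_\alpha,\tilde X_\beta]$ is a multiple of a single basis vector, both the ``image'' term $\tfrac14\sum\langle[\cdot,\cdot],\tilde X_{pq}\rangle\langle[\cdot,\cdot],\tilde X_{rs}\rangle$ and the ``adjoint'' term $-\tfrac12\sum\langle[\tilde X_{pq},\cdot],[\tilde X_{rs},\cdot]\rangle$ vanish for $(p,q)\ne(r,s)$ after a short case check. The paper establishes the same fact by exhaustive inspection of the expanded formula. Finally, you add an appeal to Lauret's criterion $\Ric_{g(1)}=-\tfrac12\,\id+D$ with $D(X_{pq})=\tfrac{q-p}{n}X_{pq}$, which the paper does not state explicitly; both you and the paper then verify $g(t)=t\,\eta_t^*g(1)$ via $\eta_t^*\theta^{ij}=t^{-(j-i)/n}\theta^{ij}$, so the two arguments converge at the end.
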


We make a few explanatory remarks regarding these results.  The solitons on these spaces were shown to exist by Lauret in \cite{Lauret2001}.  The solitons in Theorem~\ref{thm:heis} are studied by Payne \cite{Payne2010c} in the context of a related but distinct evolution equation, the ``projectivized bracket flow,'' introduced in that paper. The existence of the metric $g(1)$ in Theorem~\ref{thm:uni} may also be deduced from results in \cite{Tamaru2007}. The (mostly analytic) approach taken in the present paper provides the following additional features, which facilitate the study of these solitons as models of infinite-time (non-homogeneous) Ricci flow solutions undergoing collapse: 
\begin{itemize}
\item[(1)] We demonstrate the existence of an explicit stably Ricci-diagonal basis\footnote{A \textit{stably Ricci-diagonal basis} is a basis of the Lie algebra (equivalently, a left-invariant frame) such that the Ricci tensors of any family of diagonal metrics are all diagonal.  Such bases do not always exist.} for both families of metrics.
\item[(2)] We construct explicit families of diffeomorphisms that exhibit both families of solitons --- \emph{a priori} solutions of a static elliptic system --- as time-dependent solutions of the Ricci flow parabolic system.
\item[(3)] We deduce the asymptotic behaviors of solutions $g(t)$ in Theorem~\ref{thm:heis}, which do not readily follow from the corresponding results for projectivized bracket flow.
\end{itemize}

The structure of this paper is as follows.  In Section \ref{sec-blowdown} we recall Lott's blowdown method for finding solitons, and review the three-dimensional example case.  Section \ref{sec-heis} focuses on Heisenberg groups.  In it, we examine more closely the structure of the classical Heisenberg groups, and compute their Ricci tensors.  Using these computations, we write down the Ricci flow equations and describe the asymptotic behavior of solutions.  Then we find the solitons with the blowdown method.  We conclude with analysis of collapse of compact quotients of Heisenberg groups, interpreted as Riemannian groupoids.  

Section \ref{sec-uni} focuses on the (significantly more complicated) spaces of unitriangular matrices.  We review essential properties of these spaces, and compute their Ricci tensors with the aid of a computer algebra system.  Finally, we analyze the Ricci flow, and construct the Ricci solitons.

The appendix contains the derivation of some helpful formulas for curvature of Lie groups.

\section{The blowdown method}\label{sec-blowdown}

In this section, we recall a method for finding solitons that Lott used extensively in \cite{Lott2007}.  We also review the Heisenberg soliton in three dimensions.  As mentioned above, this example appears in several other places, but we include it here for completeness, to establish notation, and to motivate the procedures (adapted from the above references) that we will use in the general case.

Let $M$ be a manifold with local coordinates $(x^1,\dots,x^n)$, local frame $\{F_1,\dots,F_n\}$, and dual coframe $\{\theta^1,\dots,\theta^n\}$.  Suppose that $(M,\hat{g}(t))$ is a type III Ricci flow solution such that the metric $\hat{g}(t)$ stays diagonal, and that its asymptotic behavior is given by some other metric $g(t)$.  We write 
\[ g(t) = g_i(t) \, \theta^i \otimes \theta^i, \]
where\footnote{We use the symbol $\sim$ to mean $a(t) \sim b(t)$ if and only if ${\displaystyle \lim_{t \rightarrow \infty} \frac{a(t)}{b(t)} = 1}$.} $\hat{g}_i(t) \sim g_i(t)$ for all $i=1,\dots,n$.
Consider the \textit{blowdown} of this solution,
\[ g_s(t) = \frac{1}{s} g(st), \]
which itself is another Ricci flow solution.  The behavior of $g_s(t)$ as $s \rightarrow \infty$ tells us about the behavior of the original solution $g(t)$ whenever $t$ is large.  

Note that it does not matter in which order we take a blowdown or find asymptotics.  Namely,
\[ \hat{g}_i(t) \sim g_i(t) \Longleftrightarrow \frac{1}{s} \hat{g}_i(st) \sim \frac{1}{s} g_i(st). \]

The goal is to find a family of diffeomorphisms $\{\fn{\phi_s}{M}{M}\}_{s>0}$, such that $\phi_s^* g_s(t)$ is a Ricci flow solution for each $s$, and such that
\[ g_{\infty}(t) = \lim_{s \rightarrow \infty} \phi_s^* g_s(t) \]
exists.  By Proposition 2.5 in \cite{Lott2007}, this limit (whenever it exists) is a soliton metric on $M$.  

Note that for the above limit to exist, it is necessary that $\phi_s^* g_i(st)/s$ is finite and positive for each fixed $s$ and $t$.  In explicit calculations, it is extrememly helpful to choose the family $\{\phi_s\}$ such that
\[ \phi_s^* \theta^i = \alpha^i(s) \, \theta^i \]
for all $i$ and for some functions $\alpha^i(s)$.  This is usually straight-forward when the solution is diagonal.

\begin{exmp}\label{nil3}
Consider the Lie group
\[ \Nil^3 = \setvbigl{\begin{pmatrix} 1 & x & z \\ 0 & 1 & y \\ 0 & 0 & 1 \end{pmatrix}}{x,y,z \in \R} \subset \Sl_3 \R. \]
We obtain global coordinates $(x,y,z)$ from the obvious diffeomorphism with $\R^3$.  Then the group multiplication is
\[ (x,y,z) \cdot (z',y',z') = (x+x', y+y', z+z'+xy'). \]
There is a frame of left-invariant vector fields, 
\[ F_1 = \frac{\partial}{\partial x}, \quad F_2 = \frac{\partial}{\partial y} + x \frac{\partial}{\partial z}, \quad F_3 = \frac{\partial}{\partial z}, \]
and the only nontrivial Lie bracket relation is
\[ [F_1,F_2] = F_3. \]
The dual coframe is
\[ \theta^1 = dx, \quad \theta^2 = dy, \quad \theta^3 = dz - x dy. \]
A family of left-invariant metrics on $\Nil^3$ is given by
\[ \hat{g}(t) = A(t) \, \theta^1 \otimes \theta^1 + B(t) \, \theta^2 \otimes \theta^2 + C(t) \, \theta^3 \otimes \theta^3, \]
and the Ricci flow is the following system of ordinary differential equations:
\[ \frac{d}{dt} A = \frac{C}{B}, \quad \frac{d}{dt} B = \frac{C}{A}, \quad \frac{d}{dt} C = -\frac{C^2}{AB}. \]
It is well-known that the flow will preserve the diagonality of an initial metric, and the solution (with asymptotics) is
\begin{align*}
A(t) &= A_0 K^{-1/3} ( t + K )^{1/3}  \sim A_0 K^{-1/3} t^{1/3}, \\
B(t) &= B_0 K^{-1/3} ( t + K )^{1/3}  \sim B_0 K^{-1/3} t^{1/3}, \\
C(t) &= C_0 K^{1/3}  ( t + K )^{-1/3} \sim C_0 K^{1/3} t^{-1/3},
\end{align*}
for the constant
\[ K = \frac{A_0 B_0}{3 C_0}. \]
This solution exists for all time, but as $t \rightarrow \infty$, we see that $A,B \rightarrow \infty$, and $C \rightarrow 0$.  This is known as the ``pancake'' solution, as two directions are becoming more and more spread out, while the third is collapsing.

Calling the asymptotic solution $g(t)$, we see that the blowdown is
\begin{align*}
g_s(t)
&= A_0 K^{-1/3} s^{-2/3} t^{1/3} \, \theta^1 \otimes \theta^1 \\
&\qquad + B_0 K^{-1/3} s^{-2/3} t^{1/3} \, \theta^2 \otimes \theta^2 \\
&\qquad \qquad + C_0 K^{1/3} s^{-4/3} t^{-1/3} \, \theta^3 \otimes \theta^3.
\end{align*}

We now want to find the appropriate diffeomorphisms $\phi_s$.  Suppose that they are of the form
\[ \phi_s(x,y,z) = \big( \alpha(s) x, \beta(s) y, \gamma(s) z \big). \]
It is simple, then, to see that the functions
\begin{align*}
\alpha(s) &= (A_0 K^{-1/3})^{-1/2} s^{1/3} \\
\beta(s)  &= (B_0 K^{-1/3})^{-1/2} s^{1/3} \\
\gamma(s) = \alpha(s) \beta(s) &= (A_0 B_0 K^{-2/3})^{-1/2} s^{2/3}
\end{align*}
work as desired.  Thus,
\[ \phi_s^* g_s(t) = t^{1/3} \Big( \theta^1 \otimes \theta^1 + \theta^2 \otimes \theta^2 \Big) + \frac{1}{3} t^{-1/3} \,  \theta^3 \otimes \theta^3 = g_{\infty}(t), \]
and there is no need to take a limit.  A quick check shows that this is still a solution to Ricci flow, and that it satisfies
\[ g_{\infty}(t) = t \eta_t^* g_{\infty}(1) \]
for the diffeomorphisms
\[ \eta_t(x,y,z) = (t^{-1/3} x, t^{-1/3} y, t^{-2/3} z). \]
The metric $g_{\infty}(1)$ is the unique nilsoliton in dimension three, as seen in \cite{Lott2007}, \cite{Baird2007}, and \cite{Glick2008}.
\end{exmp}

\begin{rem}
Regarding the uniqueness of these diffeomorphisms in general, it is expected that if we have two families of diffeomorphisms, $\{\phi_s\}$ and $\{\psi\}_s$, that satisfy the above properties, then 
\[ \lim_{s \rightarrow \infty} \psi_s^{-1} \circ \phi_s \]
exists and is a diffeomorphism, even though $\phi_s$ and $\psi_s$ may not converge to diffeomorphisms individually.
\end{rem}

\section{Nilsolitons on Heisenberg groups}\label{sec-heis}

\subsection{The classical Heisenberg groups}\label{subsec-class-heis}

We now recall the construction and properties of the higher-dimensional, classical Heisenberg groups.  In terms of the framework outlined in \cite{Berndt1995}, these are simply connected Lie groups corresponding to \textit{generalized Heisenberg algebras} of the form $\mfr{n} = \mfr{v} \oplus \mfr{z}$, where $\mfr{z}$ is one-dimensional.  However, we will need a more explicit description.  Let $n$ be a positive integer, and set $N = 2n+1$.  The useful representation for us is 
\[ H_N \R = \setvbigl{ 
\begin{pmatrix}
1                  & \overrightarrow{a}^T & c \\
\overrightarrow{0} & I_n                  & \overrightarrow{b} \\
0                  & \overrightarrow{0}^T & 1 \end{pmatrix}}{\overrightarrow{a}, \overrightarrow{b} \in \R^n, c \in \R}  \subset \Sl_{n+2} \R, \]
where $I_n$ is the $n \times n$ identity matrix and $\overrightarrow{0} \in \R^n$ is the zero vector.  Group multiplication is again matrix multiplication:
\[ \begin{pmatrix}
1                  & \overrightarrow{a_1}^T & c_1 \\
\overrightarrow{0} & I_n                    & \overrightarrow{b_1} \\
0                  & \overrightarrow{0}^T   & 1 \end{pmatrix}
\begin{pmatrix}
1                  & \overrightarrow{a_2}^T & c_2 \\
\overrightarrow{0} & I_n                    & \overrightarrow{b_2} \\
0                  & \overrightarrow{0}^T   & 1 \end{pmatrix}
=
\begin{pmatrix}
1                  & \overrightarrow{a_1}^T + \overrightarrow{a_2}^T & c_1 + c_2 + \overrightarrow{a_1} \cdot \overrightarrow{b_2} \\
\overrightarrow{0} & I_n                                             & \overrightarrow{b_1} + \overrightarrow{b_2} \\
0                  & \overrightarrow{0}^T                            & 1 \end{pmatrix}, \]
or more briefly,
\[ (\overrightarrow{a_1},\overrightarrow{b_1},c_1) (\overrightarrow{a_2},\overrightarrow{b_2},c_2) 
= (\overrightarrow{a_1} + \overrightarrow{a_2},\overrightarrow{b_1} + \overrightarrow{b_2} ,c_1 + c_2 + \overrightarrow{a_1} \cdot \overrightarrow{b_2} ), \]
where $\cdot$ refers to the standard Euclidean inner product.  Clearly, this is a Lie group of dimension $N$.  It is easy to see that the Lie algebra of $H_N \R$ is 
\[ \mfr{h}_N \R = \setvbigl{ 
\begin{pmatrix}
0                  & \overrightarrow{X}^T & Z \\
\overrightarrow{0} & 0_n                  & \overrightarrow{Y} \\
0                  & \overrightarrow{0}^T & 0 \end{pmatrix}}{\overrightarrow{X}, \overrightarrow{Y} \in \R^n, Z \in \R}, \]
where $0_n$ is the $n \times n$ zero matrix.

If $\{e_i\}$ is the standard basis for $\R^n$, then we can describe a convenient basis for $\mfr{h}_N \R$.  Define 
\[ E_i = 
\begin{pmatrix}
0                  & e_i^T & 0 \\
\overrightarrow{0} & 0_n                  & \overrightarrow{0} \\
0                  & \overrightarrow{0}^T & 0 \end{pmatrix},
\quad 
E_{i+n} = 
\begin{pmatrix}
0                  & \overrightarrow{0}^T & 0 \\
\overrightarrow{0} & 0_n                  & e_i \\
0                  & \overrightarrow{0}^T & 0 \end{pmatrix},
\quad 
E_N = 
\begin{pmatrix}
0                  & \overrightarrow{0}^T & 1 \\
\overrightarrow{0} & 0_n                  & \overrightarrow{0} \\
0                  & \overrightarrow{0}^T & 0 \end{pmatrix}, \]
where $1 \leq i \leq n$, so that the basis is ordered as follows:
\[ \underbrace{E_1,\dots,E_n}_{E_{i}},\underbrace{E_{1+n},\dots,E_{2n}}_{E_{i+n}},E_N. \]
In what follows, lower case Roman indices will always range over $1,\dots,n$ (or sometimes $1,\dots,2n$) and Capital Roman indices (with the exception of $N$, which is fixed) will range over $1,\dots,N$.

The Lie bracket on $\mfr{h}_N \R$ is the usual matrix commutator, so the bracket relations are
\[ [E_i,E_j] = [E_i,E_N] = [E_{i+n},E_{j+n}] = [E_{i+n},E_N] = 0, \qquad [E_i,E_{j+n}] = \delta_{ij} E_N. \]
Thus, the only non-vanishing stucture constants are of the form
\begin{equation}\label{heis-sc}
c_{i,i+n}^N = 1.
\end{equation}

We have a diffeomorphism $H_N \R \cong \R^{N}$, which gives us coordinates:
\begin{equation}\label{heis-coords}
\begin{pmatrix}
1      & x^1    & \cdots & x^n    & x^N \\
0      & 1      & \cdots & 0      & x^{1+n}  \\
\vdots & \vdots & \ddots & \vdots & \vdots  \\
0      & 0      & \ddots & 1      & x^{2n}  \\
0      & 0      & \cdots & 0      & 1
\end{pmatrix} \longmapsto
(x^1,\dots,x^n,x^{1+n},\dots,x^{2n},x^N). 
\end{equation}
With respect to these coordinates, we can find a left-invariant frame with the same bracket relations as those above, and then find its coframe.  
 
\begin{lem}\label{heis-frame} With respect to the coordinates from (\ref{heis-coords}), $H_N \R$ has the following left-invariant frame $\{F_I\}$ and dual coframe $\{\theta^I\}$:
\begin{align*}
F_i     &= \partial_i,                       & \theta^i     &= dx^i,    \\
F_{i+n} &= \partial_{i+n} + x^i \partial_N,  & \theta^{i+n} &= dx^{i+n} \\
F_N     &= \partial_N,                       & \theta^N     &= dx^N - \sum_{k=1}^n x^k dx^{k+n}.
\end{align*}
The frame $\{F_I\}$ satisfies the same bracket relations as the basis $\{E_I\}$.
\end{lem}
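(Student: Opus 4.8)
The plan is to verify the three assertions bundled in the statement separately: that each $F_I$ is left-invariant, that $\{\theta^I\}$ is the coframe dual to $\{F_I\}$, and that the $F_I$ reproduce the bracket relations of the $E_I$. Of these, only the first requires genuine bookkeeping; the other two are direct computations that I would dispatch afterward.

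For left-invariance, I would first read off left translation $L_p$ in the coordinates of (\ref{heis-coords}) directly from the group law. Writing $p$ for the point with coordinates $(p^1,\dots,p^{2n},p^N)$ and using the product formula $(\overrightarrow{a_1},\overrightarrow{b_1},c_1)(\overrightarrow{a_2},\overrightarrow{b_2},c_2) = (\overrightarrow{a_1}+\overrightarrow{a_2},\overrightarrow{b_1}+\overrightarrow{b_2},c_1+c_2+\overrightarrow{a_1}\cdot\overrightarrow{b_2})$, the map $L_p$ sends $x^i \mapsto p^i + x^i$, $x^{i+n}\mapsto p^{i+n}+x^{i+n}$, and $x^N \mapsto p^N + x^N + \sum_k p^k x^{k+n}$. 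I would then identify the Lie algebra basis with the coordinate partials at the identity, $E_I \leftrightarrow \partial_I|_e$, which follows from the chart since $\exp(tE_I)$ has coordinate $t$ in slot $I$ and zero elsewhere. A left-invariant frame is obtained by translating this identity basis, $F_I|_p := (dL_p)_e(\partial_I|_e)$, so computing the (constant, since $L_p$ is affine) Jacobian of $L_p$ and applying it to each $\partial_I|_e$ should reproduce exactly the claimed $F_i=\partial_i$, $F_{i+n}=\partial_{i+n}+x^i\partial_N$, and $F_N=\partial_N$. The only nontrivial entry is $\partial(L_p)^N/\partial x^{i+n}=p^i$, which produces the cross term $x^i\partial_N$ in $F_{i+n}$. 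This cross term, arising from the $\overrightarrow{a_1}\cdot\overrightarrow{b_2}$ nonlinearity of the group law in the central coordinate, is the one place where care is needed and is the main (though still modest) obstacle.

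Granting the frame, duality is immediate: pairing each $\theta^I$ against each $F_J$ gives $\theta^I(F_J)=\delta^I_J$, the only interesting cancellation being $\theta^N(F_{j+n}) = x^j - \sum_k x^k \delta_{kj} = 0$, which is precisely why the correction term $-\sum_k x^k dx^{k+n}$ appears in $\theta^N$. Finally, the bracket relations follow from a direct commutator computation: every pair of the $F_I$ has vanishing bracket except $[F_i,F_{j+n}] = \partial_i(x^j)\,\partial_N = \delta_{ij}F_N$, matching the only nonzero structure constant $c_{i,i+n}^N=1$ from (\ref{heis-sc}). As a shortcut one could instead verify left-invariance of the $1$-forms via $L_p^*\theta^I=\theta^I$ and deduce left-invariance of the dual frame, but the pushforward computation above is equally quick and exhibits the $F_I$ directly.
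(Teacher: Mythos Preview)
Your proposal is correct; the paper does not actually supply a proof of this lemma, treating it as a routine computation and moving directly to the Ricci tensor. Your argument---pushing forward the identity basis by the Jacobian of left translation, then checking duality and brackets directly---is the standard verification and would fill that gap cleanly.
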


\subsection{Computing the Ricci tensor}\label{subset-heis-ric}

We wish to analyze solutions to the Ricci flow 
\[ \frac{d}{dt} g = -2 \Rc, \]
starting at some initial metric $g_0$.  By Lemma \ref{heis-frame}, any one-parameter family of left-invariant metrics, and therefore any Ricci flow solution, $g(t)$ on $H_N \R$ can be written as
\[ g(t) = g_{IJ}(t) \, \theta^I \otimes \theta^J. \]
Analysis of these solutions requires a detailed understanding of the Ricci tensor.  For this we will use formula (\ref{riccomp}) from Appendix \ref{app-curv}, which utilizes the Lie algebra structure.  We break that equation apart as follows:
\begin{align*}
4 R_{IJ} 
&= \quad \underbrace{ \left[ 2 c_{KI}^P c_{JM}^Q + c_{KJ}^P c_{IM}^Q - c_{KM}^P c_{IJ}^Q \right] g^{KM} g_{PQ} }_{\langle 1 \rangle} \\
&\quad + \underbrace{ \left[ c_{JM}^P c_{PI}^Q g_{QK} - c_{JM}^P c_{PK}^Q g_{QI} + c_{KI}^P c_{PM}^Q g_{QJ} - c_{KI}^P c_{PJ}^Q g_{QM} \right] g^{KM} }_{\langle 2 \rangle} \\
&\quad + \underbrace{ \left[ (a_{KJ}^P + a_{JK}^P) (a_{IM}^Q + a_{MI}^Q) - (a_{KM}^P + a_{MK}^P) (a_{IJ}^Q + a_{JI}^Q) \right] g^{KM} g_{PQ} }_{\langle 3 \rangle}.
\end{align*}
The computations are relatively straight-forward, though lengthy, and so we omit them.  We simply remark that for each of the three pieces of $R_{IJ} = R_{IJ} \langle 1 \rangle + R_{IJ} \langle 2 \rangle + R_{IJ} \langle 3 \rangle$, one must consider six cases depending on index combinations:
\[ R_{ij}, \quad R_{i,j+n}, \quad R_{iN}, \quad R_{i+n,j+n}, \quad R_{i+n,N}, \quad R_{NN}. \]
We can see this structure in the following $N \times N$ matrix:
\[ R_{IJ} = \left( 
\begin{BMAT}{ccc.ccc.l}{ccc.ccc.c}
 &           &  & &             &  & \vphantom{R} \\
 & R_{ij}    &  & & R_{i,j+n}   &  & R_{iN} \\
 &           &  & &             &  & \vphantom{R} \\
 &           &  & &             &  & \vphantom{R} \\
 & R_{i,j+n} &  & & R_{i+n,j+n} &  & R_{i+n,N} \\
 &           &  & &             &  & \vphantom{R} \\
 & R_{i N}   &  & & R_{i+n,N}   &  & R_{NN} 
\end{BMAT}
\right). \]
If we set
\[ \Sigma = \sum_{k,m=1}^n g^{km} g^{k+n,m+n} - \sum_{k=1}^n \sum_{m = n+1}^{2n} g^{k m} g^{k+n,m -n}, \]
then the components of the Ricci tensor are
\begin{align*}
R_{ij}      &= -\frac{1}{2} g^{i+n,j+n} g_{NN} + \frac{1}{2} g_{iN} g_{jN} \Sigma, \\
R_{i,j+n}   &= \frac{1}{2} g^{i+n,j} g_{NN} + \frac{1}{2} g_{iN} g_{j+n,N} \Sigma, \\
R_{iN}      &= \frac{1}{2} g_{iN} g_{NN} \Sigma, \\
R_{i+n,j+n} &= -\frac{1}{2} g^{ij} g_{NN} + \frac{1}{2} g_{i+n,j+n} g_{\beta N} \Sigma, \\
R_{i+n,N}   &= \frac{1}{2} g_{i+n,N} g_{NN} \Sigma, \\
R_{NN}      &= \frac{1}{2} g_{NN}^2 \Sigma. 
\end{align*}

\subsection{The Ricci flow}\label{subset-heis-rf}

Due to the complexity of the inverse of $g$, solving the Ricci flow system for arbitary initial data is intractable.  Instead, we assume that we have diagonal initial data, and show that the flow preserves diagonality.  So, if we assume that $g_{IJ} = g^{IJ} = 0$ for all $I \ne J$, then we claim that $R_{IJ} = 0$ as well.  From now on, we only use single subscripts for the metric components: $g_1,\dots,g_N$.  

We first note that when $g$ is diagonal, we have
\begin{equation}\label{diag-Sigma}
\Sigma = \sum_{k=1}^n g^{kk} g^{k+n,k+n} = \sum_{k=1}^n \frac{1}{g_k g_{k+n}}.
\end{equation}

Then we have
\begin{align*}
R_{ij}      &= \begin{cases} -\frac{1}{2} g^{i+n} g_N & \text{ if } i = j \\
				                         0 & \text{ if } i \ne j \end{cases}, \\
R_{i,j+n}   &= 0, \\
R_{iN}      &= 0, \\
R_{i+n,j+n} &= \begin{cases} -\frac{1}{2} g^{i} g_N & \text{ if } i = j \\
						  0 & \text{ if } i \ne j \end{cases}, \\
R_{i+n,N}   &= 0, \\
R_{NN}      &= \frac{1}{2} g_N^2 \Sigma. 
\end{align*}
This means that the natural basis for $\mfr{h}_N \R$ (or the frame for $H_N \R$ from Lemma \ref{heis-frame}) is stably Ricci-diagonal.  In other words, the Ricci tensor stays diagonal under the flow, and we have some hope to understand the behavior of the Ricci flow system: 
\begin{align}
\frac{d}{dt}g_i        &= \frac{g_N}{g_{i+n}} \label{rf-i} \\
\frac{d}{dt}g_{i+n} &= \frac{g_N}{g_i} \label{rf-in} \\
\frac{d}{dt}g_N        &= -g_N^2 \sum_{k=1}^n \frac{1}{g_k g_{k+n}} = -g_N^2 \Sigma \label{rf-N} 
\end{align}
for $i=1,\dots,n$ and $N=2n+1$.

\begin{rem}
It is possible to find an explicit Ricci flow solution in some cases.  For example, make following ansatz.  Let $X(t) = t+K$, where $K$ is some constant depending on the initial data.  This means $X(0) = K$ and $X'(t) = 1$.  Then we look for solutions of the form
\[ g_i(t) = \gamma_i X^{1/n+2}, \quad g_{\alpha}(t) = \gamma_{\alpha} X^{1/n+2}, \quad g_N(t) = \gamma_N X^{-n/n+2}. \]
However, when solving for $K$, constraints on the initial data appear.  In particular, a solution of this form requires initial data to come from an $(n+1)$-paramater family of diagonal metrics.
\end{rem}

Here we again note that there is indeed a Ricci soliton on $H_N \R$.  This follows from a theorem of Lauret.

\begin{thm}[\cite{Lauret2001}]\label{lauret-thm} A homogeneous nilmanifold $(N,g)$ with corresponding metric Lie algebra $(\mfr{n},\langle \cdot,\cdot \rangle_{\mfr{n}})$ is a Ricci soliton if and only if $(\mfr{n},\langle \cdot,\cdot \rangle_{\mfr{n}})$ admits a metric solvable extension $(\mfr{s} = \mfr{a} \oplus \mfr{n}, \langle \cdot,\cdot \rangle_{\mfr{s}})$, with $\mfr{a}$ Abelian, whose corresponding solvmanifold $(S,\tilde{g})$ is Einstein.
\end{thm}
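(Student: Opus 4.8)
I will prove the stated equivalence by routing it through the algebraic characterization of nilsolitons recalled in the introduction: $(\mathfrak{n},\langle\cdot,\cdot\rangle_{\mfr{n}})$ is a nilsoliton if and only if its Ricci endomorphism satisfies $\Ric = cI + D$ for some $c \in \R$ and some $D \in \Der(\mathfrak{n})$. Granting this, it suffices to show that such a pair $(c,D)$ exists exactly when $\mathfrak{n}$ admits an Einstein metric solvable extension $\mathfrak{s} = \mathfrak{a} \oplus \mathfrak{n}$ with $\mathfrak{a}$ abelian. The bridge between the two sides is the Ricci formula for a metric Lie group — for instance (\ref{riccomp}) of Appendix \ref{app-curv} — applied now to the solvable algebra $\mathfrak{s}$ instead of to $\mathfrak{n}$ itself. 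One preliminary observation drives everything: since $\Ric$ and $cI$ are self-adjoint, the derivation $D = \Ric - cI$ is automatically self-adjoint for $\langle\cdot,\cdot\rangle_{\mfr{n}}$, and self-adjointness of the extending derivation is precisely what forces the mixed $\mathfrak{a}$--$\mathfrak{n}$ block of the Ricci endomorphism of $\mathfrak{s}$ to vanish.

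For the forward direction I would start from $\Ric_{\mathfrak{n}} = cI + D$ with $D \in \Der(\mathfrak{n})$ self-adjoint and build the one-dimensional extension $\mathfrak{s} = \R H \oplus \mathfrak{n}$ by declaring $\ad(H)|_{\mathfrak{n}} = D$, with $H \perp \mathfrak{n}$ and $|H|^2$ still to be chosen. Since $D$ is a derivation, $\mathfrak{s}$ is a solvable Lie algebra with nilradical $\mathfrak{n}$ and abelian complement $\mathfrak{a} = \R H$. Computing the Ricci endomorphism of $\mathfrak{s}$ blockwise, the restriction to $\mathfrak{n}$ is $\Ric_{\mathfrak{n}}$ minus a mean-curvature correction which, by self-adjointness of $D$, is a scalar multiple of $D$; normalizing $|H|^2 = \mathrm{tr}(D)$ makes this correction equal to $D$, so that $\Ric_{\mathfrak{s}}|_{\mathfrak{n}} = (cI + D) - D = c\,\id$. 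The mixed block vanishes as noted, and the lone $\mathfrak{a}$-entry works out to $-\mathrm{tr}(D^2)/\mathrm{tr}(D)$; that this too equals $c$ is exactly the trace identity $\mathrm{tr}(\Ric_{\mathfrak{n}}\,D) = 0$ enjoyed by the soliton derivation. Hence $\Ric_{\mathfrak{s}} = c\,\id$ and the associated solvmanifold is Einstein.

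For the converse I would assume $(\mathfrak{s} = \mathfrak{a} \oplus \mathfrak{n}, \langle\cdot,\cdot\rangle_{\mfr{s}})$ is Einstein with $\mathfrak{a}$ abelian and read off the consequences of $\Ric_{\mathfrak{s}} = c\,\id$ through the same curvature formula. Vanishing of the mixed $\mathfrak{a}$--$\mathfrak{n}$ block forces each $\ad(A)|_{\mathfrak{n}}$, $A \in \mathfrak{a}$, to be self-adjoint (equivalently, the extension may be taken standard in Heber's sense), after which the $\mathfrak{n}$-block of the Einstein equation reads $\Ric_{\mathfrak{n}} = c\,\id + \ad(H)|_{\mathfrak{n}}$, where $H \in \mathfrak{a}$ is the mean-curvature vector determined by $\langle H, A\rangle = \mathrm{tr}(\ad A|_{\mathfrak{n}})$. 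Setting $D = \ad(H)|_{\mathfrak{n}}$, which is a derivation of $\mathfrak{n}$, recovers $\Ric_{\mathfrak{n}} = cI + D$, and the algebraic characterization then yields that $(\mathfrak{n},\langle\cdot,\cdot\rangle_{\mfr{n}})$ is a nilsoliton.

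I expect the main obstacle to be the converse, and specifically the step that forces the maps $\ad(A)|_{\mathfrak{n}}$ to be self-adjoint — that is, that an Einstein solvable extension may be assumed standard. Handling this honestly requires a careful treatment of the full solvmanifold Ricci formula, including the mean-curvature and skew-part contributions that are absent from the purely nilpotent computation of the previous sections, together with a reduction to the rank-one situation $\dim\mathfrak{a} = 1$. It is here that the argument is genuinely more delicate than the clean blockwise computation of the forward direction, where self-adjointness of $D$ was handed to us for free.
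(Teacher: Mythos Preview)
The paper does not prove this theorem; it is quoted verbatim as a result of Lauret \cite{Lauret2001} and is used as a black box to conclude that nilsolitons exist on $H_N\R$ and $\UT_n\R$. There is therefore no in-paper proof to compare your proposal against.

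That said, your sketch is the standard argument and is essentially the one Lauret gives. A couple of points deserve care if you intend to flesh it out. In the forward direction you set $|H|^2=\mathrm{tr}(D)$, which presupposes $\mathrm{tr}(D)>0$; this does hold for a nonabelian nilsoliton (since $c<0$ and $\mathrm{tr}(\Ric_{\mfr{n}})<0$ force $\mathrm{tr}(D)=\mathrm{tr}(\Ric_{\mfr{n}})-c\dim\mfr{n}>0$), but you should say so. The trace identity $\mathrm{tr}(\Ric_{\mfr{n}}\,\varphi)=0$ for every $\varphi\in\Der(\mfr{n})$ that you invoke is a genuine fact about nilpotent metric Lie algebras (it is how $\Ric_{\mfr{n}}$ sits relative to the $\Gl(\mfr{n})$ moment map), and it is worth citing or proving rather than asserting. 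For the converse you are right that the real work is the reduction to a standard extension---showing that the Einstein condition forces the symmetric and skew parts of each $\ad(A)|_{\mfr{n}}$ to decouple so that one may take them self-adjoint; this is Heber's contribution and is not a one-line consequence of the Ricci formula in Appendix~\ref{app-curv}.
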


The simply connected Lie group corresponding to the Lie algebra $\mfr{s} = \mfr{h}_N \R \oplus \mfr{\R}$ is an example of a \textit{Damek-Ricci space}.  These are known to be Einstein manifolds, and their Lie algebras are metric solvable extensions as in the theorem; see \cite{Berndt1995} for details.  Therefore, there is a left-invariant metric $g$ such that $(H_N \R,g)$ is a Ricci nilsoliton.  

As mentioned in the introduction, this metric is unique up to scaling and isometry.  We will describe it explicitly.

\subsection{Asymptotics of general solutions}
Now we consider the behavior of arbitrary diagonal solutions of Ricci flow.  From this we will obtain the nilsoliton using the blowdown method.

Assume that $g_1,\dots,g_{2n}, g_N$ solve the Ricci flow.  As diagonal components of a metric, they are positive functions of $t$.  We can use (\ref{rf-i}), (\ref{rf-in}), and (\ref{rf-N}) to see that
\[ \frac{d}{dt} \frac{g_i}{g_{i+n}} = \frac{d}{dt} \frac{g_{i+n}}{g_i} = 0, \]
\[ \frac{d}{dt} g_1 \cdots g_n g_N = \frac{d}{dt} g_{1+n} \dots g_{2n} g_N = 0. \]
This means these quantities are conserved (i.e., constant), so we set
\begin{align*}
A_i     &= \frac{g_i}{g_{i+n}} = \frac{g_i(0)}{g_{i+n}(0)}, \\
B_{i+n} &= \frac{g_{i+n}}{g_i} = \frac{g_{i+n}(0)}{g_i(0)}, \\
C_1     &= g_1 \cdots g_n g_N = g_1(0) \cdots g_n(0) g_N(0), \\
C_2     &= g_{1+n} \dots g_{2n} g_N = g_{1+n}(0) \dots g_{2n}(0) g_N(0),
\end{align*}
where $1 \leq i \leq n$.  Note that $A_i B_{i+n} = 1$, and that
\begin{equation}\label{c1c2-rel}
\frac{C_1}{A_1 \cdots A_n} = C_2, \quad \frac{C_2}{B_{1+n} \cdots B_{2n}} = C_1. 
\end{equation}

We rewrite the Ricci flow equation for $g_i$:
\begin{equation}\label{rf-ii}
\frac{d}{dt} g_i = \frac{g_N}{g_{i+n}} \frac{g_i}{g_i} = A_i \frac{g_N}{g_i},
\end{equation}
and similarly
\begin{equation}\label{rf-i2}
\frac{d}{dt} g_i^2 = 2 g_i \frac{g_N}{g_{i+n}} = 2 A_i g_N,
\end{equation}
which can be solved by integrating. 

Note that (\ref{rf-i}) implies that $g_i$ is an increasing function, so $\Sigma$ is positive and decreasing by (\ref{diag-Sigma}).  Then equation (\ref{rf-N}) implies that $g_N$ is a decreasing function, and since it is positive we have 
\[ \frac{d}{dt} g_N = -g_N^2 \Sigma \geq -\Sigma(0) g_N^2, \]
and this implies
\begin{equation}\label{gN-sol}
g_N(t) \geq \frac{1}{g_N(0)^{-1} + \Sigma(0)t}.
\end{equation}

If we set $G_N(t) = \int_1^t g_N(r) \, dr$, then this is a positive, increasing function.  By (\ref{gN-sol}), we see that
\[ \lim_{t \rightarrow \infty} G_N(t) = \lim_{t \rightarrow \infty} \int_1^t g_N(r) \, dr \geq \lim_{t \rightarrow \infty} \int_1^t \frac{dr}{g_N(0)^{-1} + \Sigma(0) r} = \infty, \]
so $G_N(t) \rightarrow \infty$ as $t \rightarrow \infty$.  Using (\ref{rf-i2}) we have
\[ g_i(t)^2 = g_i(0)^2 + 2 A_i G_N(t). \]
If $1 \leq i \ne j \leq n$, we use this to obtain
\[ \lim_{t \rightarrow \infty} \frac{g_i^2}{g_j^2} = \lim_{t \rightarrow \infty} \frac{g_i(0)^2 + 2 A_i G_N(t)}{g_i(0)^2 + 2 A_j G_N(t)} = \frac{A_i}{A_j}. \]
This implies that
\begin{equation}\label{gi-asym1}
g_i \sim \sqrt{\frac{A_i}{A_j}} g_j, \quad g_{i+n} \sim \sqrt{\frac{B_{i+n}}{B_{j+n}}} g_{j+n}.
\end{equation}
Since $C_1$ is conserved, for each fixed $1 \leq i \leq n$, we have
\begin{align}
g_N 
&= \frac{C_1}{g_1 \cdots g_n} \nonumber \\
&\sim \frac{1}{ \sqrt{\frac{A_1}{A_i}} g_i \cdots \sqrt{\frac{A_i}{A_i}} g_i \cdots \sqrt{\frac{A_n}{A_i}} g_i} \nonumber \\
&= \sqrt{\frac{A_i^n}{A_1 \cdots A_n}} g_i^{-n} \nonumber \\ 
&= \sqrt{A_i^n C_1 C_2} g_i^{-n}, \label{gN-asym1}
\end{align}
by (\ref{gi-asym1}).  With reference to (\ref{rf-ii}), this gives
\[ A_i \frac{g_N}{g_i} \sim \sqrt{A_i^{n+2} C_1 C_2} g_i^{-(n+1)}. \]
We would like to see that the solution $\tilde{g}_i$ to the equation
\begin{equation}\label{gi-eqn}
\frac{d}{dt} \tilde{g}_i = \sqrt{A_i^{n+2} C_1 C_2} \tilde{g}_i^{-(n+1)} 
\end{equation}
is asymptotically equivalent to $g_i$.  For this we need a basic lemma.

\begin{lem}
Suppose that $u(t)$ is a solution to the ordinary differential equation 
\[ \frac{d}{dt} u = c, \quad u(0) = u_0, \]
where $c>0$, and that $v(t)$ is a solution to the asymptotic equation
\[ \frac{d}{dt} v = c(1 + \epsilon(t)), \quad v(0) = v_0, \]
where $\epsilon(t) \rightarrow 0$ as $t \rightarrow \infty$.  Then $u/v \rightarrow 1$ as $t \rightarrow \infty$.  That is, $u \sim v$.
\end{lem}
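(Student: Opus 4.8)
The plan is to solve both ordinary differential equations explicitly, since each prescribes the derivative directly, and then to compare the two solutions as a ratio. First I would integrate. From $u' = c$ with $u(0) = u_0$ we get $u(t) = u_0 + ct$, and integrating $v' = c(1+\epsilon)$ with $v(0) = v_0$ gives
\[ v(t) = v_0 + ct + c\int_0^t \epsilon(s)\,ds. \]
Writing $E(t) = \int_0^t \epsilon(s)\,ds$ and dividing numerator and denominator by $t$, the ratio becomes
\[ \frac{u(t)}{v(t)} = \frac{u_0 + ct}{v_0 + ct + cE(t)} = \frac{u_0/t + c}{v_0/t + c + cE(t)/t}. \]
Since $u_0/t \to 0$ and $v_0/t \to 0$, the whole problem collapses to showing that $E(t)/t \to 0$; that is, that the time-average of $\epsilon$ vanishes.

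The key step, and really the only content, is a Cesàro-type estimate: if $\epsilon(t) \to 0$ then $\tfrac{1}{t}\int_0^t \epsilon(s)\,ds \to 0$. To establish it, given $\eta > 0$ I would choose $T$ so that $|\epsilon(s)| < \eta/2$ for all $s \geq T$, and then for $t > T$ split the integral at $T$:
\[ \left| \frac{1}{t} \int_0^t \epsilon(s)\,ds \right| \leq \frac{1}{t}\int_0^T |\epsilon(s)|\,ds + \frac{t - T}{t}\cdot\frac{\eta}{2}. \]
The first term has a fixed numerator and so tends to $0$ as $t \to \infty$, while the second is at most $\eta/2$. Hence the average is below $\eta$ for all sufficiently large $t$, which proves $E(t)/t \to 0$.

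Combining the two pieces, the reduced denominator tends to $c$ and the reduced numerator also tends to $c$ (both $c > 0$), so $u/v \to 1$, as claimed. The main obstacle is conceptually mild but genuine: pointwise decay of $\epsilon$ does \emph{not} bound $E(t)$ itself, since $\int_0^t \epsilon$ may well grow without bound; what decay buys us is only \emph{sublinear} growth of $E$, and the averaging lemma above is precisely the tool that converts pointwise decay into the sublinear control the ratio requires. I would also remark in passing that $\epsilon = v'/c - 1$ is locally integrable, being built from the derivative of the differentiable solution $v$, so all the integrals written above are well-defined.
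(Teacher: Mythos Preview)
Your proof is correct and follows the same overall structure as the paper's: integrate both equations, divide numerator and denominator by $t$, and reduce everything to the claim that $\tfrac{1}{t}\int_0^t \epsilon(s)\,ds \to 0$. The only difference lies in how that averaging claim is established. The paper splits into cases according to whether $\int_0^\infty |\epsilon(s)|\,ds$ is finite or infinite, disposing of the finite case trivially and invoking L'H\^opital's rule in the infinite case. Your direct Ces\`aro argument---split the integral at a threshold $T$ beyond which $|\epsilon|$ is small---is more elementary, avoids the case distinction, and does not require any differentiability or L'H\^opital machinery. Both reach the same conclusion; yours is the cleaner route.
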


\begin{proof}
We can solve both equations by integrating:
\[ u(t) = u_0 + c \int_0^t \, ds = t \left( \frac{u_0}{t} + c \right), \]
\[ v(t) = v_0 + c \int_0^t (1+\epsilon(s)) \, ds = t \left( \frac{u_0}{t} + c + \frac{c}{t} \int_0^t \epsilon(s) \, ds \right). \]
To analyze the ratio $u/v$, we must know the behavior of the integral term in $v$.  Note that, as a positive increasing function, 
\[ \int_0^t |\epsilon(s)| \, ds \longrightarrow L \in (0,\infty] \]
as $t \rightarrow \infty$.  If $L < \infty$, then
\[ \left| \lim_{t \rightarrow \infty} \frac{c}{t} \int_0^t \epsilon(s) \, ds \right| \leq c \lim_{t \rightarrow \infty} \frac{1}{t} \int_0^t |\epsilon(s)| \, ds \leq cL \lim_{t \rightarrow \infty} \frac{1}{t} = 0. \]
On the other hand, if $L = \infty$, then
\[ \left| \lim_{t \rightarrow \infty} \frac{c}{t} \int_0^t \epsilon(s) \, ds \right| \leq c \lim_{t \rightarrow \infty} \frac{1}{t} \int_0^t |\epsilon(s)| \, ds \stackrel{LH}{=} c \lim_{t \rightarrow \infty} \frac{|\epsilon(t)|}{1} = 0. \]
This means 
\[ \lim_{t \rightarrow \infty} \frac{u}{v} = \lim_{t \rightarrow \infty} \frac{\frac{u_0}{t} + c}{ \frac{u_0}{t} + c + \frac{c}{t} \int_0^t \epsilon(s) \, ds} = 1, \]
by the squeeze theorem.
\end{proof}

Note that equation (\ref{gi-eqn}) is equivalent to 
\[ \frac{d}{dt} \tilde{g}_i^{n+2} = (n+2) \sqrt{A_i^{n+2} C_1 C_2}, \]
and equation (\ref{rf-ii}) is equivalent to
\[ \frac{d}{dt} g_i^{n+2} = (n+2) A_i g_i^n g_N. \]
By (\ref{gN-asym1}), the right sides of these equations are asymptotically equivalent.  Now, taking $u = \tilde{g}_i^{n+2}$ and $v = g_i^{n+2}$ in the Lemma, we see that $g_i \sim \tilde{g}_i$.  

Equation (\ref{gi-eqn}) has an explicit solution:
\begin{align}
\tilde{g}_i 
&= \left( (n+2) \sqrt{A_i^{n+2} C_1 C_2} \right)^{1/n+2} t^{1/n+2} \nonumber \\
&= (n+2)^{1/n+2} \sqrt{A_i} (C_1 C_2)^{1/2(n+2)} t^{1/n+2}. \label{gi-asym2}
\end{align}
We can plug this into (\ref{gN-asym1}) to obtain
\begin{align}
g_N 
&= \frac{C_1}{g_1 \cdots g_n} \nonumber \\
&\sim \frac{C_1}{(n+2)^{n/n+2} \sqrt{A_1 \cdots A_n} (C_1 C_2)^{n/2(n+2)}} t^{-n/n+2} \nonumber \\
&= (n+2)^{-n/n+2} (C_1 C_2)^{1/n+2} t^{-n/n+2}, \label{gN-asym2}
\end{align}
which we call $\tilde{g}_N$.  Note that this is independent of $i$.

We can repeat these calculations starting with $g_N = C_2/g_{n+1} \cdots g_{2n}$ to obtain 
\[ \tilde{g}_{i+n} = (n+2)^{1/n+2} \sqrt{B_{i+n}} (C_1 C_2)^{1/2(n+2)} t^{1/n+2}. \]
If we plug this back into $g_N = C_2/g_{1+n} \cdots g_{2n}$, then we get the same result for $g_N$ that we found in equation (\ref{gN-asym2}).  Putting everything together, we have the following result.

\begin{thm*}[\ref{thm:heis}(a)]
If $g_0$ is a diagonal left-invariant metric on $H_N \R$, then the solution $g(t)$ of Ricci flow, with $g(0) = g_0$, has the following asymptotic behavior:
\begin{align*}
g_i     &\sim (n+2)^{1/n+2} \sqrt{A_i} C^{1/2(n+2)} t^{1/n+2}, \\
g_{i+n} &\sim (n+2)^{1/n+2} \sqrt{B_{i+n}} C^{1/2(n+2)} t^{1/n+2}, \\
g_{N}   &\sim (n+2)^{-n/n+2} C^{1/n+2} t^{-n/n+2},
\end{align*}
where
\[ A_i = \frac{g_i(0)}{g_{i+n}(0)}, \quad
B_{i+n} = \frac{g_{i+n}(0)}{g_i(0)}, \quad
C       = g_1(0) \cdots g_{2n}(0) g_N(0)^2. \]
\end{thm*}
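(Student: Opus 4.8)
The plan is to recognize that this theorem consolidates the chain of asymptotic estimates built up throughout the subsection, so the proof amounts to organizing those estimates and verifying that the constants assemble into the stated form. First I would extract the conserved quantities of the flow: differentiating the ratios $g_i/g_{i+n}$ and the products $g_1\cdots g_n g_N$ using the system (\ref{rf-i}), (\ref{rf-in}), (\ref{rf-N}) shows these are constant, which defines $A_i$, $B_{i+n}$, $C_1$, $C_2$. The relations (\ref{c1c2-rel}) together with $A_iB_{i+n}=1$ then let me reduce the full three-family system to the behavior of the single family $g_i$ coupled to $g_N$.

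Next I would pin down the qualitative behavior. Since each $g_i$ is increasing by (\ref{rf-i}), the quantity $\Sigma$ is positive and decreasing by (\ref{diag-Sigma}), so (\ref{rf-N}) forces $g_N$ to decay; the differential inequality $\frac{d}{dt}g_N \geq -\Sigma(0)g_N^2$ yields the explicit lower bound (\ref{gN-sol}), from which $G_N(t)=\int_1^t g_N(r)\,dr$ diverges as $t\to\infty$. Integrating the reformulated equation (\ref{rf-i2}) gives $g_i(t)^2 = g_i(0)^2 + 2A_i G_N(t)$, and because $G_N(t)\to\infty$ the initial-data terms wash out of every ratio, so $g_i^2/g_j^2 \to A_i/A_j$, producing the comparison relations (\ref{gi-asym1}). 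Using that $C_1$ is conserved then expresses $g_N$ asymptotically through a single $g_i$ as in (\ref{gN-asym1}).

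The analytic core is to upgrade these ratio statements into genuine rates. Substituting (\ref{gN-asym1}) into (\ref{rf-ii}) shows that $g_i$ obeys, up to a vanishing multiplicative error, the autonomous equation (\ref{gi-eqn}). Raising both the true and the idealized equations to the $(n+2)$ power turns each into the form $\frac{d}{dt}(\cdot)^{n+2} = c\,(1+\epsilon(t))$ with the same leading constant $c = (n+2)\sqrt{A_i^{n+2}C_1C_2}$, so the basic lemma applies with $u=\tilde g_i^{\,n+2}$ and $v=g_i^{\,n+2}$ to give $g_i \sim \tilde g_i$. Solving (\ref{gi-eqn}) explicitly yields the growth $t^{1/n+2}$ with the constant recorded in (\ref{gi-asym2}), and feeding this back through (\ref{gN-asym1}) produces the decay $t^{-n/n+2}$ in (\ref{gN-asym2}); the symmetric computation starting from $C_2$ and $B_{i+n}$ gives the $g_{i+n}$ statement. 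Finally I would observe that $C_1C_2 = g_1(0)\cdots g_{2n}(0)g_N(0)^2 = C$, so the constants in (\ref{gi-asym2}) and (\ref{gN-asym2}) match those claimed exactly.

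I expect the only genuine obstacle to be the asymptotic-equivalence step, since the error term $\epsilon(t)$ arising from (\ref{gN-asym1}) need not be integrable, so a naive comparison of the two solutions would fail. This is precisely why the basic lemma is phrased to cover both the case $L<\infty$ and the case $L=\infty$ (the latter handled by L'Hôpital), and invoking it correctly — after the $(n+2)$-power reformulation that forces the leading coefficients to agree — is what converts the heuristic ``$g_i$ behaves like $\tilde g_i$'' into a rigorous statement.
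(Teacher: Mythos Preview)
Your proposal is correct and follows the paper's own argument essentially step for step: the same conserved quantities, the same monotonicity and divergence of $G_N$, the same ratio estimates (\ref{gi-asym1}) and (\ref{gN-asym1}), the same $(n+2)$-power reformulation feeding into the basic lemma, and the same back-substitution to recover $g_N$ and the symmetric $g_{i+n}$ case. Your closing identification $C_1C_2=C$ is exactly how the constants are meant to be read.
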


\begin{rem} These asymptotics coincide with the case $n=1$ from Example \ref{nil3}.
\end{rem}

\subsection{The nilsoliton}\label{subsec-heis-nil}

Writing $g(t)$ for the asymptotic solution of Theorem \ref{thm:heis}(a), we now use the blowdown procedure of Section \ref{sec-blowdown} to obtain the soliton metric.  The components of $g_s(t)$ are
\begin{align*}
(g_s(t))_i     &= (n+2)^{1/n+2} A_i^{1/2} C^{1/2(n+2)} s^{-(n+1)/n+2} t^{1/n+2}, \\
(g_s(t))_{i+n} &= (n+2)^{1/n+2} B_{i+n}^{1/2} C^{1/2(n+2)} s^{-(n+1)/n+2} t^{1/n+2}, \\
(g_s(t))_N     &= (n+2)^{-n/n+2} C^{1/n+2} t^{-n/n+2}.
\end{align*}
Using the coordinates and coframe from Subsection \ref{subsec-class-heis}, we seek diffeomorphisms $\phi_s$ such that $\phi_s^* g_s(t)$ is a metric for all $s$, and 
\[ \lim_{s \rightarrow \infty} \phi_s^* g_s(t) \]
exists.  Suppose that $\phi_s$ is of the form
\[ \phi_s(x^1,\dots,x^N) = \left( \alpha^1(s) x^1, \dots, \alpha^N(s) x^N \right) \]
for some functions $\alpha^i(s)$.  Then we have $\phi_s^* \theta^i = \alpha^i(s) \, \theta^i$ for $1 \leq i \leq 2n$. When $1 \leq i \leq n$, setting 
\[ \alpha^i(s) = (n+2)^{-1/2(n+2)} A_i^{-1/4} C^{-1/4(n+2)} s^{(n+1)/2(n+2)} \]
gives, for fixed $i$, 
\[ \phi_s^* \left( (g_s(t))_i \, \theta^i \otimes \theta^i \right) = \alpha^i(s)^2 (g_s(t))_i \, \theta^i \otimes \theta^i = t^{1/n+2} \, \theta^i \otimes \theta^i. \]
Next, note that 
\[ \alpha^i(s) \alpha^{i+n}(s) = (n+2)^{-1/n+2} C^{-1/2(n+2)} s^{n+1/n+2}, \]
and this does not depend on $i$.  Therefore, if we set $\alpha^N(s) = \alpha^i(s) \alpha^{i+n}(s)$, then we have
\begin{align*}
\phi_s^* \theta^N 
&= \alpha^i(s) \alpha^{i+n}(s) \, dx^N - \sum_{i=1}^n \alpha^i(s) \alpha^{i+n}(s) x^i \, dx^{i+n} \\
&= \alpha^N(s) \, \theta^N, 
\end{align*}
and so
\[ \phi_s^* \left( (g_s(t))_N \, \theta^N \otimes \theta^N \right) = \frac{1}{n+2} t^{-n/n+2} \, \theta^N \otimes \theta^N. \]

We have a limit metric
\[ g_{\infty}(t) = \phi_s^* g_s(t) = t^{1/n+2} \Big( \theta^1 \otimes \theta^1 + \cdots + \theta^{2n} \otimes \theta^{2n} \Big) + \frac{1}{n+2} t^{-n/n+2} \, \theta^N \otimes \theta^N, \]
and to verify that it is a soliton, we seek diffeomorphisms $\{\eta_t\}$ such that $g_{\infty}(t)$ satisfies
\[ g_{\infty}(t) = t \eta_t^* g_{\infty}(1). \]
For some numbers $a$ and $b$, suppose that the diffeomorphisms are of the form
\[ \eta_t(x^1,\dots,x^{2n},x^N) = (t^a x^1,\dots,t^a x^{2n},t^b x^N). \]
Then for $1 \leq i \leq 2n$, we have $\eta_t^* \theta^i = t^a \theta^i$ and, if $b = 2a$, $\eta_t^* \theta^N = t^b \theta^N$.  This means
\[ t \eta_t^* g(1) = t^{2a + 1} \Big( \theta^1 \otimes \theta^1 + \cdots + \theta^{2n} \otimes \theta^{2n} \Big) + \frac{1}{n+2} t^{2b + 1} \, \theta^N \otimes \theta^N. \]
For this to equal $g(t)$, we must have
\[ \frac{1}{n+2} = 2 a + 1, \qquad -\frac{n}{n+2} = 2b + 1, \]
which implies 
\[ a = -\frac{1}{2} \frac{n+1}{n+2}, \qquad b = 2a = - \frac{n+1}{n+2}. \]
Thus, $g(t)$ is an expanding Ricci soliton with respect to the diffeomorphisms
\[ \eta_t(x^1,\dots,x^{2n},x^N) = (t^{-\frac{1}{2} \frac{n+1}{n+2}} x^1,\dots,t^{-\frac{1}{2} \frac{n+1}{n+2}} x^{2n},t^{-\frac{n+1}{n+2}} x^N). \]
To summarize, we have another result.

\begin{thm*}[\ref{thm:heis} (b)]
Let $H_N \R$ have coordinates $(x^i)$ as in (\ref{heis-coords}) and coframe as in Lemma \ref{heis-frame}.  Let $g(t)$ be any solution to Ricci flow on $H_N \R$ with diagonal initial data.  For the diffeomorphisms $\{\phi_s\}$ defined as above, we have
\begin{align*}
\lim_{s \rightarrow \infty} \frac{1}{s} \phi_s^* g(st) 
&= t^{1/n+2} \Big( \theta^1 \otimes \theta^1 + \cdots + \theta^{2n} \otimes \theta^{2n} \Big) + \frac{1}{n+2} t^{-n/n+2} \, \theta^N \otimes \theta^N \\
&= g_{\infty}(t).
\end{align*}
The metric $g_{\infty}(1)$ is a nilsoliton with respect to the diffeomorphisms
\[ \eta_t(x^1,\dots,x^{2n},x^N) = (t^{-\frac{1}{2} \frac{n+1}{n+2}} x^1,\dots,t^{-\frac{1}{2} \frac{n+1}{n+2}} x^{2n},t^{-\frac{n+1}{n+2}} x^N). \]
\end{thm*}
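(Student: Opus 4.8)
The plan is to combine the asymptotic description from Theorem~\ref{thm:heis}(a) with Lott's blowdown machinery from Section~\ref{sec-blowdown}. Writing $g(t)$ for the genuine Ricci flow solution and $\tilde g(t)$ for the explicit asymptotic solution of part~(a), I would first reduce to computing the blowdown limit of $\tilde g(t)$ alone. This is legitimate because for fixed $t$ we have $st \to \infty$ as $s \to \infty$, so the ratio of each component of $g_s(t) = s^{-1}g(st)$ to the corresponding component of $s^{-1}\tilde g(st)$ tends to $1$; hence the two blowdowns, once pulled back by the same $\phi_s$, share the same limit (this is the order-independence of blowdown and asymptotics noted in Section~\ref{sec-blowdown}). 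Substituting $t \mapsto st$ and dividing by $s$ introduces the explicit factor $s^{-(n+1)/(n+2)}$ in the $i$ and $i+n$ directions and leaves the $N$-component independent of $s$, exactly as recorded in the components of $g_s(t)$.

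The central step is to manufacture diffeomorphisms $\phi_s$ that cancel these powers of $s$ and simultaneously normalize the leading constants to $1$ (and to $\frac{1}{n+2}$ in the $N$-direction). I would restrict to coordinate dilations $\phi_s(x^1,\dots,x^N) = (\alpha^1(s)x^1,\dots,\alpha^N(s)x^N)$, which act diagonally as $\phi_s^*\theta^i = \alpha^i(s)\,\theta^i$ for $1 \le i \le 2n$. Imposing $\alpha^i(s)^2\,(g_s(t))_i = t^{1/(n+2)}$ then pins down each $\alpha^i$ uniquely for $i \le 2n$.

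The main obstacle --- and the only place where the group structure genuinely enters --- is the non-closed coframe element $\theta^N = dx^N - \sum_k x^k\,dx^{k+n}$, which mixes coordinates. Here I would compute $\phi_s^*\theta^N = \alpha^N\,dx^N - \sum_i \alpha^i\alpha^{i+n}\,x^i\,dx^{i+n}$ and demand that the result be a scalar multiple of $\theta^N$; this forces $\alpha^N = \alpha^i\alpha^{i+n}$ for \emph{every} $i$ at once. The crucial consistency check is that the product $\alpha^i(s)\alpha^{i+n}(s)$ is independent of $i$, which holds precisely because the factors $A_i$ and $B_{i+n} = A_i^{-1}$ appearing in the asymptotics cancel. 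This cancellation is what makes the coordinate-dilation ansatz admissible, and it is the step I expect to demand the most care.

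Once $\phi_s$ is consistently defined, $\phi_s^* g_s(t)$ is already independent of $s$ and equals $g_\infty(t)$, so the limit exists trivially and, by Proposition~2.5 of \cite{Lott2007}, is a soliton metric. To exhibit the soliton structure explicitly I would finally seek $\eta_t$ of the form $(t^a x^1,\dots,t^a x^{2n},t^b x^N)$; the same non-closedness of $\theta^N$ forces $b = 2a$, and matching the two surviving powers of $t$ against the exponents $1/(n+2)$ and $-n/(n+2)$ in $g_\infty(t)$ yields $a = -\frac{1}{2}\frac{n+1}{n+2}$ and $b = 2a$, verifying $g_\infty(t) = t\,\eta_t^* g_\infty(1)$ and hence that $g_\infty(1)$ is the claimed expanding nilsoliton.
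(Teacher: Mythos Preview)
Your approach is correct and essentially identical to the paper's own argument in Section~\ref{subsec-heis-nil}: the same coordinate-dilation ansatz for $\phi_s$, the same consistency check that $\alpha^i(s)\alpha^{i+n}(s)$ is $i$-independent (via $A_i B_{i+n}=1$), and the same determination of $\eta_t$ from $b=2a$. One minor slip to watch when you carry it out: the $N$-component of $g_s(t)$ is \emph{not} $s$-independent---it carries a factor $s^{-2(n+1)/(n+2)}$---but this is exactly cancelled by $\alpha^N(s)^2 = (\alpha^i\alpha^{i+n})^2$, so your conclusion that $\phi_s^* g_s(t)$ is already independent of $s$ remains valid.
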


The behavior here is analagous to the ``pancake'' effect mentioned in Example \ref{nil3}.  The first $2n$ directions become more and more spread out, while the last direction collapses.  More precisely, there is Gromov-Hausdorff convergence to $(\R^{2n},g_{\mathrm{can}})$.

\begin{rem}
The diffeomorphisms $\phi_s$ and $\eta_t$ here, and those in Example \ref{nil3}, are actually group automorphisms.  Compare with \cite{Lott2007}, Remark 3.1 and Section 4.
\end{rem}

\begin{rem}
Looking at the three-dimensional nilsoliton, one can extrapolate with the following ansatz:
 \[ g_i = g_{i+n} = t^a, \quad g_N = c t^b, \]
for some numbers $a,b$ and $c$.  Using the Ricci flow equations, it is easy to obtain
\[ a = \frac{1}{n+2}, \quad b = \frac{-n}{n+2}, \quad c = \frac{1}{n+2}. \]
Thus,
\[ g_i(t) = g_{i+n}(t) = t^{1/n+2}, \quad g_N(t) = \frac{1}{n+2} t^{-n/n+2}, \]
which is the nilsoliton $g_{\infty}$ above.  This does not provide any information about behavior of general solutions, however.
\end{rem}

\subsection{The groupoid interpretation}\label{subsec-heis-grpd}

In \cite{Lott2007} and \cite{Lott2010}, Lott initiated the use of Riemannian groupoids in understanding the notion of convergence under Ricci flow.  One motivating issue is that, as in the case of $\Nil^3$, the limit of a Ricci flow solution $(M,g(t))$ as $t \rightarrow \infty$ may not be an object of the same dimension (i.e., it may collapse).  This means some data has been lost in the process of taking the limit.  The groupoid formalism provides a way to keep track of all such data (e.g., the limiting object has the same dimension as $M$), and to provide a picture of the limiting behavior that is similar to, but more convenient than, the usual Gromov-Hausdorff notion of convergence.  One may consult \cite{Lott2007} and \cite{Glick2008} for background on Riemannian groupoids, or the books \cite{Mackenzie2005}, \cite{Moerdijk2003} for a more general introduction to groupoids. 

Our analysis here follows the examples found in \cite{Glick2008}, which give concrete pictures of collapse.  Here is the basic idea, tailored to our present context.  In order to understand the collapse under Ricci flow of certain compact, locally homogenous manifolds arising as quotients of $H_N \R$, we replace such a manifold $(M=H_N \R/\Gamma,g)$ by its representation as a Riemannian ``action'' groupoid, $(H_N \R \rtimes \Gamma,\tilde{g})$.  Also called a ``cross-product'' groupoid, this is an object whose orbit space is $M$.  Here, 
\[ \fnl{\pi}{(H_N \R,\tilde{g})}{(M,g)} \]
is the universal cover with induced metric, and $\Gamma \subset H_N \R$ is a discrete, cocompact subgroup that can be interpreted in several ways.  It is the fundamental group $\pi_1(M,m_0)$, the group of deck transformation of the cover, or a group of isometries acting transitively on $(H_N \R,\tilde{g})$.  In any case, it acts by left translation on $H_N \R$. 

If $g(t)$ is a Ricci flow solution on $M$, then we are considering a solution $\tilde{g}(t)$ on $H_N \R$.  By the prevous section, the blowdown technique provides a sequence $\phi_{s} \tilde{g}_{s}(t)$ of metrics converging to a metric $\tilde{g}_{\infty}(t)$, where $\tilde{g}_{\infty}(1)$ is a soliton.  To understand the limiting behavior as $s \rightarrow \infty$, we now consider
\[ (H_N \R \rtimes \Gamma_{s}, \phi_{s} \tilde{g}_{s}(t)). \]
Note that the subgroup $\Gamma_{s}$ acting on $H_N \R$ depends on $s$, since the metric is changing.  If, in the limit, this sequence of discrete subgroups converges to a continuous subgroup, then there is collapse.  Therefore, we must understand how these subgroups evolve.

Recall that the blowdown metrics $\tilde{g}_s(t)$ are obtained using diffeomorphisms 
\[ \phi_s(x^1,\dots,x^N) = (\alpha^1(s) x^1, \dots, \alpha^N(s) x^N). \]
(The explicit forms of the $\alpha$'s are not imporant here.)  Then the limit is
\[ \tilde{g}_{\infty}(t) = \phi_s^* g_s(t) = t^{1/n+2} \Big( \theta^1 \otimes \theta^1 + \cdots + \theta^{2n} \otimes \theta^{2n} \Big) + \frac{1}{n+2} t^{-n/n+2} \theta^N \otimes \theta^N. \]

Without loss of generality, after change of coordinates we can take $\Gamma_s$ to be an integer lattice.  Therefore, write elements of $\Gamma_s$ as
\[ h_z(s) = h_{z^1(s),\dots,z^N(s)} = \left( z^1(s),\dots,z^N(s) \right), \]
with $z^i(s) \in \Z$.  These isometries act on $(H_N \R,\tilde{g}_s(t))$ by left translation and, as deck transformations, they pull back by conjugation. Therefore, 
\begin{align*}
&\phi_x^* h_z (x^1,\dots,x^{2n},x^N) \\
&= \phi_x^{-1} h_z \phi_s (x^1,\dots,x^{2n},x^N) \\
&= \left( x^1 + \frac{z^1(s)}{\alpha^1(s)}, \dots, x^{2n} + \frac{z^{2n}(s)}{\alpha^{2n}(s)}, x^N + \frac{z^N(s)}{\alpha^N(s)} + \frac{z^1(s)}{\alpha^1(s)} x^{n+1} + \cdots + \frac{z^n(s)}{\alpha^n(s)} x^{2n} \right),
\end{align*}
using the component-wise form of the group multiplication.  

It is a basic fact that, given any strictly increasing sequence $\{\sigma_j\}$ with $\sigma_j \rightarrow \infty$ as $j \rightarrow \infty$, and any $u \in \R$, there is some sequence of integers $\{\tau_j\}$ such that $\tau_j/\sigma_j \rightarrow u$.  Indeed, take $\tau_j = \lfloor \sigma_j u \rfloor$.

Therfore, consider any strictly increasing sequence $\{s_j\}$ with $s_j \rightarrow \infty$ as $j \rightarrow \infty$.  The sequences $\{\alpha^I(s_j)\}$ are also strictly increasing.  Then given any real numbers $u^1,\dots,u^N$, we may choose $z^i(s_j) \in \Gamma_{s_j}$ such that 
\[ \lim_{j \rightarrow \infty} \frac{z^i(s_j)}{\alpha^i(s_j)} = u^i, \quad \lim_{j \rightarrow \infty} \frac{z^{i+n}(s_j)}{\alpha^{i+n}(s_j)} = u^{i+n}, \quad \lim_{j \rightarrow \infty} \frac{z^N(s_j)}{\alpha^N(s_j)} = u^N. \]
This means that as $j \rightarrow \infty$, the isometries $\phi_{s_j}^* h_z$ converge to isometries $h_u$ of $\tilde{g}_{\infty}(t)$ that act on $H_N \R$ as follows:
\[ h_u(x^1,\dots,x^{2n},x^N) = (x^1 + u^1,\dots,x^{2n} + u^{2n}, x^N + u^N + u^1 x^{n+1} + \cdots + u^n x^{2n}). \]
The $u^i$ were arbitary real numbers, so every element of $H_N \R$ is attained this way.  This means $\Gamma_{s_j}$ converges to a continuous group: the entire group $H_N \R$.

We conclude that 
\[ \lim_{j \rightarrow \infty} (H_N \rtimes \Gamma_{s_j},\phi_{s_j}^* \tilde{g}_{s_j}(t)) = (H_N \R \rtimes H_N \R,\tilde{g}_{\infty}(t)) \]
as Riemannian groupoids.  There is maximal collapsing, as the orbit space of the groupoid $H_N \R \rtimes H_N \R$ is a point.  This is the same behavior seen in the three-dimensional case. 

\begin{rem}
Note that this is a different description than the ``pancake'' model described earlier, which occurs as $t \rightarrow \infty$.  The model here illustrates collapse as the metrics converge to the actual soliton metric.
\end{rem}

\section{Nilsolitons on spaces of unitriangular matrices}\label{sec-uni}

\subsection{Unitriangular matrices}\label{subsec-uni}

Let $\UT_n \R \subset \Sl_n\R$ denote the collection of real, unitriangular $n \times n$ matrices under matrix multiplication.  These are matrices with $1$ on the diagonal and $0$ below.  This is a Lie group of dimension $N = \binom{n}{2} = n(n-1)/2$, and $\UT_n \R \cong \R^N$.  These groups are nilpotent, and are in some sense ``model'' nilpotent Lie groups.  Indeed, it is a consequence of Engel's theorem that every simply connected nilpotent Lie group is a subgroup of $\UT_n \R$ for some $n$.

The Lie algebra $\mfr{ut}_n \R$ of $\UT_n \R$ consists of upper-triangular matrices with 0 on the diagonal.  It has a basis 
\[ \mcl{B}_n = \{B_{ij}\}_{1 \leq i < j \leq n}, \]
where $B_{ij}$ is the $n \times n$ matrix such that that 
\[ (B_{ij})_{pq} = \delta_{ip} \delta_{jq}. \]
In other words, $B_{ij}$ is the matrix with 1 in the $(i,j)$ component, and zero elsewhere.  

This Lie algebra inherits the Lie bracket from $\mfr{gl}_n \R$.  To describe the bracket, note that if $i < j$ and $k < l$, then
\[ (B_{ij} B_{kl})_{pq} = \sum_r (B_{ij})_{pr} (B_{kl})_{rq} = \sum_r \delta_{ip} \delta_{jr} \delta_{kr} \delta_{lq} = \delta_{ip} \delta_{lq} \delta_{jk} = \delta_{jk} (B_{il})_{pq}, \]
which implies
\[ [B_{ij},B_{kl}] = \delta_{jk} B_{il} - \delta_{il} B_{kj}, \]
and so the structure constants are
\begin{equation}\label{str-consts}
c_{ij,kl}^{pq} = \delta_{ip} \delta_{lq} \delta_{jk} - \delta_{kp} \delta_{jq} \delta_{il}.
\end{equation}

Any diffeomorphism $\UT_n \R \cong \R^{N}$ gives us coordinates, so let us take
\begin{equation}\label{uni-coords}
\begin{pmatrix} 
1      & x^{12} & x^{13} & \cdots & x^{1,n-1} & x^{1n} \\ 
0      & 1      & x^{23} & \cdots & x^{2,n-1} & x^{2n} \\
0      & 0      & 1      & \cdots & x^{3,n-1} & x^{3n} \\
\vdots & \vdots & \vdots & \ddots & \vdots    & \vdots \\
0      & 0      & 0      & \cdots & 1         & x^{n-1,n} \\
0      & 0      & 0      & \cdots & 0         & 1
\end{pmatrix} \longmapsto
(x^{12},x^{13},\dots,x^{n-1,n}). 
\end{equation}
With respect to these coordinates, we can find a left-invariant frame with the same bracket relations as those above, and then find its coframe.  If $a = (a_{ij})$ and $b = (b_{ij})$ are elements of $\UT_n \R$, the multiplication rule is
\[ x^{ij}(a \cdot b) = x^{ij}(a) + x^{ij}(b) + \sum_{i < k < j} x^{ik}(a) x^{kj}(b). \]
 
\begin{lem}\label{uni-frame} With respect to the coordinates from (\ref{uni-coords}), the space $\UT_n \R$ has the following left-invariant frame $\{F_{ij}\}$ and dual coframe $\{\theta^{ij}\}$:
\begin{align*}
F_i         &= \partial_{ij} + \sum_{k<i} x^{ki} \partial_{kj}, \\
\theta^{ij} &= dx^{ij} - \sum_{i<p<j}( 2x^{ip}-\Theta^{ij}_p ) \, dx^{pj}, 
\end{align*}
where 
\[ \Theta_p^{ij} = \sum_{k=0}^p \sum_{i < r_1 < \cdots < r_k < p} x^{i r_1} x^{r_1 r_2} \cdots x^{r_k p}, \]
and the inner sum ranges over all ordered subsets of $\{i+1,i+2\dots,p-1\}$ of size $k$.  The frame $\{F_{ij}\}$ satisfies the same bracket relations as the basis $\{B_{ij}\}$ above.
\end{lem}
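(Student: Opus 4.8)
The plan is to establish the three assertions in turn---the frame formula, the coframe formula, and the bracket relations---with the coframe carrying essentially all of the combinatorial weight.

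First I would produce the frame directly from the group law. The left-invariant vector field agreeing with $B_{ij}$ at the identity has value at a point $a$ equal to $(dL_a)_e(\partial_{ij})$, so its $\partial_{pq}$-component is $\frac{\partial}{\partial b^{ij}}x^{pq}(a\cdot b)\big|_{b=e}$. Differentiating the multiplication rule $x^{pq}(a\cdot b)=x^{pq}(a)+x^{pq}(b)+\sum_{p<k<q}x^{pk}(a)\,x^{kq}(b)$ in $b^{ij}$ at the identity retains only the $k=i$, $q=j$ term of the sum, which yields $F_{ij}=\partial_{ij}+\sum_{k<i}x^{ki}\partial_{kj}$. This step is immediate once the group law is in hand.

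For the coframe I would exploit the fact that, for each fixed second index $j$, the vectors $\{F_{ij}\}_{i<j}$ are obtained from the coordinate fields $\{\partial_{ij}\}_{i<j}$ by a matrix that is unitriangular in the first index, its strictly-lower part having entries $x^{ki}$ for $k<i$. Writing $\theta^{ij}=dx^{ij}-\sum_{i<p<j}c_p\,dx^{pj}$ and imposing the duality relations $\theta^{ij}(F_{kj})=0$ for $i<k<j$ produces the recursion $c_k=x^{ik}-\sum_{i<p<k}c_p\,x^{pk}$, which one solves by expanding into strictly increasing paths $i<r_1<\cdots<r_m<p$. The resulting monomials $x^{ir_1}x^{r_1r_2}\cdots x^{r_mp}$ are precisely the terms assembled in $\Theta^{ij}_p$, and collecting them produces the stated coefficient $2x^{ip}-\Theta^{ij}_p$. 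Equivalently, since the dual of a left-invariant frame is automatically left-invariant, one may identify $\theta^{ij}$ as the $B_{ij}$-component of the Maurer--Cartan form $g^{-1}\,dg$ and read off the same coefficients from the entries of $g^{-1}$.

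Finally, the bracket relations require no separate computation: the assignment $B_{ij}\mapsto F_{ij}$ sending a Lie-algebra element to its left-invariant extension is a Lie-algebra isomorphism onto the left-invariant vector fields, so $[F_{ij},F_{kl}]$ automatically reproduces $\delta_{jk}F_{il}-\delta_{il}F_{kj}$; if desired, this can also be checked by a short direct bracket computation using the explicit formula for $F_{ij}$. The main obstacle is the coframe step---solving the recursion for $c_p$ and verifying that the signed path sums collapse exactly into $2x^{ip}-\Theta^{ij}_p$, and then confirming the full duality pairing $\theta^{ij}(F_{kl})=\delta^{ij}_{kl}$ in the remaining index cases. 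The difficulty there is the bookkeeping of the path combinatorics and the telescoping cancellations, rather than anything conceptual.
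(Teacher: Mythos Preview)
The paper states this lemma without proof, so there is nothing to compare against; I evaluate your proposal on its own merits. Your overall strategy is exactly right: the frame comes from differentiating the multiplication rule, the brackets come for free because $B_{ij}\mapsto F_{ij}$ is the left-invariant extension, and the coframe is found either by solving the duality recursion you write down or, equivalently, by reading off the entries of the Maurer--Cartan form $g^{-1}\,dg$.

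The one genuine gap sits precisely at the step you label ``bookkeeping.'' Your recursion $c_k = x^{ik} - \sum_{i<p<k} c_p\,x^{pk}$ is correct, but iterating it (equivalently, expanding $g^{-1}=\sum_m(-N)^m$ for $g=I+N$) yields
\[
c_p \;=\; \sum_{m\ge 0} (-1)^m \!\!\sum_{i<r_1<\cdots<r_m<p}\!\! x^{ir_1}x^{r_1r_2}\cdots x^{r_m p},
\]
with \emph{alternating} signs. The lemma's $\Theta^{ij}_p$ carries no such sign, so $2x^{ip}-\Theta^{ij}_p$ matches the true $c_p$ only through the quadratic terms. For $n\ge 5$ they disagree: with $i=1$, $j=5$, $p=4$ the stated coefficient of $-dx^{45}$ in $\theta^{15}$ is $x^{14}-x^{12}x^{24}-x^{13}x^{34}-x^{12}x^{23}x^{34}$, and one checks directly that the resulting form paired with $F_{45}=\partial_{45}+x^{14}\partial_{15}+x^{24}\partial_{25}+x^{34}\partial_{35}$ gives $2x^{12}x^{23}x^{34}\neq 0$. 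Thus your method, carried through honestly, would \emph{correct} the stated coframe rather than confirm it; the assertion that the path sums ``collapse exactly into $2x^{ip}-\Theta^{ij}_p$'' is where the argument fails as written. (This does not affect the paper's subsequent use of the lemma, which only needs that each $\theta^{ij}$ is homogeneous of weight $j-i$ under the scalings $\eta_t$---a property the correct alternating-sign coframe also enjoys.)
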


\subsection{Computing the Ricci tensor}\label{subsec-uni-ric}

Our goal is to analyze solutions of Ricci flow on $\UT_n \R$.  By Lemma \ref{uni-frame}, such metrics $g(t)$ can be written as
\[ g(t) = g_{ij,kl}(t) \, \theta^{ij} \otimes \theta^{kl}. \]
Once again, our analysis requires us to understand the Ricci tensor, and equation (\ref{riccomp}) still applies.  In terms of $\UT_n \R$, where double indices are needed, we can rewrite it as
\begin{align}
&4 R_{ij,kl} = \label{ricci} \\
&\quad \underbrace{ \left[ 2 c_{pq,ij}^{tu} c_{kl,rs}^{vw} + c_{pq,kl}^{tu} c_{ij,rs}^{vw} - c_{pq,rs}^{tu} c_{ij,kl}^{vw} \right] g^{pq,rs} g_{tu,vw} }_{\langle 1 \rangle}  \nonumber\\
&+ \underbrace{ \left[ c_{kl,rs}^{tu} c_{tu,ij}^{vw} g_{vw,pq} - c_{kl,rs}^{tu} c_{tu,pq}^{vw} g_{vw,ij} + c_{pq,ij}^{tu} c_{tu,rs}^{vw} g_{vw,kl} - c_{pq,ij}^{tu} c_{tu,kl}^{vw} g_{vw,rs} \right] g^{pq,rs} }_{\langle 2 \rangle} \nonumber \\
&+ \underbrace{ \left[ (a_{pq,kl}^{tu} + a_{kl,pq}^{tu}) (a_{ij,rs}^{vw} + a_{rs,ij}^{vw}) - (a_{pq,rs}^{tu} + a_{rs,pq}^{tu}) (a_{ij,kl}^{vw} + a_{kl,ij}^{vw}) \right] g^{pq,rs} g_{tu,vw} }_{\langle 3 \rangle}, \nonumber
\end{align}
where $1 \leq p < q \leq n, 1 \leq r < s \leq n, 1 \leq t < u \leq n, 1 \leq v < w \leq n$.

With the help of a computer algebra system, we can substitute (\ref{str-consts}) and a double-indexed version of (\ref{adj-const}) into this rather unwieldy formula to obtain the following enormous expressions.
\footnotesize
\begin{align*}
4 R_{ij,kl}\langle 1 \rangle 
&= \sum_{\substack{ 1 \leq p < q \leq n \\ 1 \leq r < s \leq n \\ 1 \leq t < u \leq n \\ 1 \leq v < w \leq n}} 
\left\{ 
\begin{array}{l}
-   g_{tu,vw} g^{pq,rs} \delta_{il} \delta_{jw} \delta_{kv} \delta_{ps} \delta_{qu} \delta_{rt} \\
+   g_{tu,vw} g^{pq,rs} \delta_{iv} \delta_{jk} \delta_{lw} \delta_{ps} \delta_{qu} \delta_{rt} \\
-   g_{tu,vw} g^{pq,rs} \delta_{is} \delta_{jw} \delta_{kq} \delta_{lu} \delta_{pt} \delta_{rv} \\
- 2 g_{tu,vw} g^{pq,rs} \delta_{iq} \delta_{ju} \delta_{ks} \delta_{lw} \delta_{pt} \delta_{rv} \\
+   g_{tu,vw} g^{pq,rs} \delta_{is} \delta_{jw} \delta_{kt} \delta_{lp} \delta_{qu} \delta_{rv} \\
+ 2 g_{tu,vw} g^{pq,rs} \delta_{it} \delta_{jp} \delta_{ks} \delta_{lw} \delta_{qu} \delta_{rv} \\
+   g_{tu,vw} g^{pq,rs} \delta_{il} \delta_{jw} \delta_{kv} \delta_{pt} \delta_{qr} \delta_{su} \\
-   g_{tu,vw} g^{pq,rs} \delta_{iv} \delta_{jk} \delta_{lw} \delta_{pt} \delta_{qr} \delta_{su} \\
+ 2 g_{tu,vw} g^{pq,rs} \delta_{iq} \delta_{ju} \delta_{kv} \delta_{lr} \delta_{pt} \delta_{sw} \\
+   g_{tu,vw} g^{pq,rs} \delta_{iv} \delta_{jr} \delta_{kq} \delta_{lu} \delta_{pt} \delta_{sw} \\
-   g_{tu,vw} g^{pq,rs} \delta_{iv} \delta_{jr} \delta_{kt} \delta_{lp} \delta_{qu} \delta_{sw} \\
- 2 g_{tu,vw} g^{pq,rs} \delta_{it} \delta_{jp} \delta_{kv} \delta_{lr} \delta_{qu} \delta_{sw}
\end{array} 
\right. \\
\intertext{}
4 R_{ij,kl}\langle 2 \rangle 
&= \sum_{\substack{ 1 \leq p < q \leq n \\ 1 \leq r < s \leq n \\ 1 \leq t < u \leq n \\ 1 \leq v < w \leq n}} 
\left\{ 
\begin{array}{l}
- g_{vw,rs} g^{pq,rs} \delta_{iq} \delta_{ju} \delta_{ku} \delta_{lw} \delta_{pt} \delta_{tv} \\
+ g_{vw,rs} g^{pq,rs} \delta_{it} \delta_{jp} \delta_{ku} \delta_{lw} \delta_{qu} \delta_{tv} \\
- g_{vw,pq} g^{pq,rs} \delta_{iu} \delta_{jw} \delta_{ks} \delta_{lu} \delta_{rt} \delta_{tv} \\
+ g_{vw,ij} g^{pq,rs} \delta_{ks} \delta_{lu} \delta_{pu} \delta_{qw} \delta_{rt} \delta_{tv} \\
+ g_{vw,pq} g^{pq,rs} \delta_{iu} \delta_{jw} \delta_{kt} \delta_{lr} \delta_{su} \delta_{tv} \\
- g_{vw,ij} g^{pq,rs} \delta_{kt} \delta_{lr} \delta_{pu} \delta_{qw} \delta_{su} \delta_{tv} \\
+ g_{vw,kl} g^{pq,rs} \delta_{iq} \delta_{ju} \delta_{pt} \delta_{ru} \delta_{sw} \delta_{tv} \\
- g_{vw,kl} g^{pq,rs} \delta_{it} \delta_{jp} \delta_{qu} \delta_{ru} \delta_{sw} \delta_{tv} \\
+ g_{vw,rs} g^{pq,rs} \delta_{iq} \delta_{ju} \delta_{kv} \delta_{lt} \delta_{pt} \delta_{uw} \\
- g_{vw,rs} g^{pq,rs} \delta_{it} \delta_{jp} \delta_{kv} \delta_{lt} \delta_{qu} \delta_{uw} \\
+ g_{vw,pq} g^{pq,rs} \delta_{iv} \delta_{jt} \delta_{ks} \delta_{lu} \delta_{rt} \delta_{uw} \\
- g_{vw,ij} g^{pq,rs} \delta_{ks} \delta_{lu} \delta_{pv} \delta_{qt} \delta_{rt} \delta_{uw} \\
- g_{vw,kl} g^{pq,rs} \delta_{iq} \delta_{ju} \delta_{pt} \delta_{rv} \delta_{st} \delta_{uw} \\
+ g_{vw,kl} g^{pq,rs} \delta_{it} \delta_{jp} \delta_{qu} \delta_{rv} \delta_{st} \delta_{uw} \\
- g_{vw,pq} g^{pq,rs} \delta_{iv} \delta_{jt} \delta_{kt} \delta_{lr} \delta_{su} \delta_{uw} \\
+ g_{vw,ij} g^{pq,rs} \delta_{kt} \delta_{lr} \delta_{pv} \delta_{qt} \delta_{su} \delta_{uw} 
\end{array} 
\right. \\
\intertext{}
4 R_{ij,kl}\langle 3 \rangle 
&= \sum_{\substack{ 1 \leq a < b \leq n \\ 1 \leq c < d \leq n \\1 \leq p < q \leq n \\ 1 \leq r < s \leq n \\ 1 \leq t < u \leq n \\ 1 \leq v < w \leq n}} 
\left\{ \begin{array}{l}
- g_{mn,ab} g_{pq,ef} g_{rs,tu} g^{mn,pq} g^{rs,vw} g^{tu,cd} \delta_{av} \delta_{bl} \delta_{cj} \delta_{df} \delta_{ei} \delta_{kw} \\
- g_{ij,ef} g_{mn,ab} g_{rs,tu} g^{mn,pq} g^{rs,vw} g^{tu,cd} \delta_{av} \delta_{bl} \delta_{cq} \delta_{df} \delta_{ep} \delta_{kw} \\
+ g_{mn,ab} g_{pq,ef} g_{rs,tu} g^{mn,pq} g^{rs,vw} g^{tu,cd} \delta_{av} \delta_{bl} \delta_{ce} \delta_{di} \delta_{fj} \delta_{kw} \\ 
+ g_{ij,ef} g_{mn,ab} g_{rs,tu} g^{mn,pq} g^{rs,vw} g^{tu,cd} \delta_{av} \delta_{bl} \delta_{ce} \delta_{dp} \delta_{fq} \delta_{kw} \\ 
+ g_{mn,ab} g_{pq,ef} g_{rs,tu} g^{mn,pq} g^{rs,vw} g^{tu,cd} \delta_{ak} \delta_{bw} \delta_{cj} \delta_{df} \delta_{ei} \delta_{lv} \\ 
+ g_{ij,ef} g_{mn,ab} g_{rs,tu} g^{mn,pq} g^{rs,vw} g^{tu,cd} \delta_{ak} \delta_{bw} \delta_{cq} \delta_{df} \delta_{ep} \delta_{lv} \\ 
- g_{mn,ab} g_{pq,ef} g_{rs,tu} g^{mn,pq} g^{rs,vw} g^{tu,cd} \delta_{ak} \delta_{bw} \delta_{ce} \delta_{di} \delta_{fj} \delta_{lv} \\ 
- g_{ij,ef} g_{mn,ab} g_{rs,tu} g^{mn,pq} g^{rs,vw} g^{tu,cd} \delta_{ak} \delta_{bw} \delta_{ce} \delta_{dp} \delta_{fq} \delta_{lv} \\ 
+ g_{kl,ef} g_{pq,ab} g_{rs,tu} g^{mn,pq} g^{rs,vw} g^{tu,cd} \delta_{av} \delta_{bn} \delta_{cj} \delta_{df} \delta_{ei} \delta_{mw} \\ 
- g_{kl,ab} g_{pq,ef} g_{rs,tu} g^{mn,pq} g^{rs,vw} g^{tu,cd} \delta_{av} \delta_{bn} \delta_{cj} \delta_{df} \delta_{ei} \delta_{mw} \\ 
+ g_{ij,ef} g_{pq,ab} g_{rs,tu} g^{mn,pq} g^{rs,vw} g^{tu,cd} \delta_{av} \delta_{bn} \delta_{cl} \delta_{df} \delta_{ek} \delta_{mw} \\ 
- g_{ij,ef} g_{kl,ab} g_{rs,tu} g^{mn,pq} g^{rs,vw} g^{tu,cd} \delta_{av} \delta_{bn} \delta_{cq} \delta_{df} \delta_{ep} \delta_{mw} \\ 
- g_{kl,ef} g_{pq,ab} g_{rs,tu} g^{mn,pq} g^{rs,vw} g^{tu,cd} \delta_{av} \delta_{bn} \delta_{ce} \delta_{di} \delta_{fj} \delta_{mw} \\ 
+ g_{kl,ab} g_{pq,ef} g_{rs,tu} g^{mn,pq} g^{rs,vw} g^{tu,cd} \delta_{av} \delta_{bn} \delta_{ce} \delta_{di} \delta_{fj} \delta_{mw} \\ 
- g_{ij,ef} g_{pq,ab} g_{rs,tu} g^{mn,pq} g^{rs,vw} g^{tu,cd} \delta_{av} \delta_{bn} \delta_{ce} \delta_{dk} \delta_{fl} \delta_{mw} \\ 
+ g_{ij,ef} g_{kl,ab} g_{rs,tu} g^{mn,pq} g^{rs,vw} g^{tu,cd} \delta_{av} \delta_{bn} \delta_{ce} \delta_{dp} \delta_{fq} \delta_{mw} \\
- g_{kl,ef} g_{pq,ab} g_{rs,tu} g^{mn,pq} g^{rs,vw} g^{tu,cd} \delta_{am} \delta_{bw} \delta_{cj} \delta_{df} \delta_{ei} \delta_{nv} \\ 
+ g_{kl,ab} g_{pq,ef} g_{rs,tu} g^{mn,pq} g^{rs,vw} g^{tu,cd} \delta_{am} \delta_{bw} \delta_{cj} \delta_{df} \delta_{ei} \delta_{nv} \\ 
- g_{ij,ef} g_{pq,ab} g_{rs,tu} g^{mn,pq} g^{rs,vw} g^{tu,cd} \delta_{am} \delta_{bw} \delta_{cl} \delta_{df} \delta_{ek} \delta_{nv} \\ 
+ g_{ij,ef} g_{kl,ab} g_{rs,tu} g^{mn,pq} g^{rs,vw} g^{tu,cd} \delta_{am} \delta_{bw} \delta_{cq} \delta_{df} \delta_{ep} \delta_{nv} \\ 
+ g_{kl,ef} g_{pq,ab} g_{rs,tu} g^{mn,pq} g^{rs,vw} g^{tu,cd} \delta_{am} \delta_{bw} \delta_{ce} \delta_{di} \delta_{fj} \delta_{nv} \\ 
- g_{kl,ab} g_{pq,ef} g_{rs,tu} g^{mn,pq} g^{rs,vw} g^{tu,cd} \delta_{am} \delta_{bw} \delta_{ce} \delta_{di} \delta_{fj} \delta_{nv} \\ 
+ g_{ij,ef} g_{pq,ab} g_{rs,tu} g^{mn,pq} g^{rs,vw} g^{tu,cd} \delta_{am} \delta_{bw} \delta_{ce} \delta_{dk} \delta_{fl} \delta_{nv} \\ 
- g_{ij,ef} g_{kl,ab} g_{rs,tu} g^{mn,pq} g^{rs,vw} g^{tu,cd} \delta_{am} \delta_{bw} \delta_{ce} \delta_{dp} \delta_{fq} \delta_{nv} \\ 
+ g_{kl,ef} g_{mn,ab} g_{rs,tu} g^{mn,pq} g^{rs,vw} g^{tu,cd} \delta_{av} \delta_{bq} \delta_{cj} \delta_{df} \delta_{ei} \delta_{pw} \\ 
+ g_{ij,ef} g_{mn,ab} g_{rs,tu} g^{mn,pq} g^{rs,vw} g^{tu,cd} \delta_{av} \delta_{bq} \delta_{cl} \delta_{df} \delta_{ek} \delta_{pw} \\ 
- g_{kl,ef} g_{mn,ab} g_{rs,tu} g^{mn,pq} g^{rs,vw} g^{tu,cd} \delta_{av} \delta_{bq} \delta_{ce} \delta_{di} \delta_{fj} \delta_{pw} \\ 
- g_{ij,ef} g_{mn,ab} g_{rs,tu} g^{mn,pq} g^{rs,vw} g^{tu,cd} \delta_{av} \delta_{bq} \delta_{ce} \delta_{dk} \delta_{fl} \delta_{pw} \\ 
- g_{kl,ef} g_{mn,ab} g_{rs,tu} g^{mn,pq} g^{rs,vw} g^{tu,cd} \delta_{ap} \delta_{bw} \delta_{cj} \delta_{df} \delta_{ei} \delta_{qv} \\ 
- g_{ij,ef} g_{mn,ab} g_{rs,tu} g^{mn,pq} g^{rs,vw} g^{tu,cd} \delta_{ap} \delta_{bw} \delta_{cl} \delta_{df} \delta_{ek} \delta_{qv} \\ 
+ g_{kl,ef} g_{mn,ab} g_{rs,tu} g^{mn,pq} g^{rs,vw} g^{tu,cd} \delta_{ap} \delta_{bw} \delta_{ce} \delta_{di} \delta_{fj} \delta_{qv} \\ 
+ g_{ij,ef} g_{mn,ab} g_{rs,tu} g^{mn,pq} g^{rs,vw} g^{tu,cd} \delta_{ap} \delta_{bw} \delta_{ce} \delta_{dk} \delta_{fl} \delta_{qv}
\end{array} \right. 
\end{align*}
\normalsize
Let us describe how obtain something usable from this.  First, the expressions simplify somewhat, due to the presence of myriad Kronecker deltas.  For example,
\[ \sum_{\substack{ 1 \leq p < q \leq n \\ 1 \leq r < s \leq n \\ 1 \leq t < u \leq n \\ 1 \leq v < w \leq n}} 
g_{tu,vw} g^{pq,rs} \delta_{il} \delta_{jw} \delta_{kv} \delta_{ps} \delta_{qu} \delta_{rt} 
= \delta_{il} \sum_{\substack{ 1 \leq p < q \leq n \\ 1 \leq r < p \leq n}} 
g_{rq,kj} g^{pq,rp}. \]
Even after simplifying each term in this way, the result is still hopelessly complicated.  So, in order to analyze it effectively we must assume that the inital metric $g$ is diagonal, and then show that the Ricci tensor stays diagonal.  Then the above expressions can be simplified once again by dropping any terms that vanish due to off-diagonal metric factors.  For example,
\[ \delta_{il} \sum_{\substack{ 1 \leq p < q \leq n \\ 1 \leq r < p \leq n}} 
g_{rq,kj} g^{pq,rp}
= \delta_{il} g_{kj,kj} g^{kj,kk}. \]
Again, we do this for each term, and we must show that the off-diagonal terms of the Ricci tensor vanish.  The indices for such terms satisfy $i \neq k$ or $j \ne l$.  There are several index cases that force terms to vanish:
\begin{enumerate}
\item impossible indexing situations, e.g. $g_{ii,kl}$;
\item both $i = k$ and $j = l$ appear in a factor, or both $i = l$ and $j = k$ appear in a factor;
\item all four indices appear in one metric/metric inverse component.
\end{enumerate}
Using this list, one can then see by inspection that $R_{ij,kl} \langle 1 \rangle$, $R_{ij,kl} \langle 2 \rangle$, and $R_{ij,kl} \langle 3 \rangle$ vanish, so the natural basis for $\mfr{ut}_n \R$ (or the frame for $\UT_n \R$ from Lemma \ref{uni-frame}) is stably Ricci-diagonal.  As before, this means $g$ stays diagonal under the flow.  To see what these diagonal terms are, just replace $k$ with $i$ and $l$ with $j$.  Most of the resulting terms contain factors that obviously vanish, according to the list above, and others can be combined.  Once this is done, we drop back to two indices.  That is, write $g_{ij}$ and $R_{ij}$ to mean $g_{ij,ij}$ and  $R_{ij,ij}$, respectively.  This yields 
\[ 4 R_{ij} = 
-2 \sum_{1 \leq p < i} \frac{g_{pj}}{g_{pi}}
+2 g_{ij}^2 \sum_{i < q < j} \frac{1}{g_{iq} g_{qj}}
-2 \sum_{j < r \leq n} \frac{g_{ir}}{g_{jr}}.
\]

\subsection{Ricci flow and the nilsoliton}\label{subsec-uni-nil}

With this part of the calculation complete, we see that the Ricci flow on $\UT_n \R$ is the system
\begin{equation}\label{uni-RF}
\frac{d}{dt} g_{ij} = 
  \sum_{1 \leq p < i} \frac{g_{pj}}{g_{pi}}
-  g_{ij}^2 \sum_{i < q < j} \frac{1}{g_{iq} g_{qj}}
+ \sum_{j < r \leq n} \frac{g_{ir}}{g_{jr}},
\end{equation}
for $1 \leq i < j \leq n$.

\begin{exmp}
When $n=3$, $\UT_3 \R$ is the familiar Heisenberg group, $\Nil^3$.  The Ricci flow is then the system:
\[ \frac{d}{dt} g_{12} = \frac{g_{13}}{g_{23}}, \quad
\frac{d}{dt} g_{13} = - \frac{g_{13}^2}{g_{12} g_{23}}, \quad
\frac{d}{dt} g_{23} = \frac{g_{13}}{g_{12}}. \]
If we set $A = g_{12}, B = g_{23}$, and $C = g_{13}$, then this becomes
\[ \frac{d}{dt} A = \frac{C}{B}, \quad
\frac{d}{dt} B = \frac{C}{A}, \quad
\frac{d}{dt} C = -\frac{C^2}{AB}, \]
which agrees with the equations from Example \ref{nil3}.  (Those equations were ordered differently to agree with the pattern in Section \ref{sec-heis}.)
\end{exmp}

The goal is now to construct a nilsoliton on each space $\UT_n \R$.  These exist by Lauret's theorem, \ref{lauret-thm} above.  The Iwasawa decomposition of the general linear group is $\Gl_n \R = KAN$, where $K = \Or_n \R$, $A$ is the abelian subgroup of diagonal matrices, and $N = \UT_n \R$.  The quotient $G/K$ is an irreducible symmetric space of non-compact type, and such spaces are all Einstein.  But $G/K \cong AN$, whose Lie algebra is a metric solvable extension of $\mfr{ut}_n \R$.  Thus, Lauret's theorem applies.

Now, due to the complexity of the system (\ref{uni-RF}), we are unable to determine the asymptotics of an arbitrary diagonal solution.  Thus, we cannot use the blowdown method of Section \ref{sec-blowdown}.  Instead, we must make a suitable ansatz.

If we picture a diagonal metric as an upper triangular matrix with zeros on the diagonal (which is natural, given the indices), and extrapolate from low-dimensional cases, we might suspect that metric components along diagonals of the matrix have ``the same'' behavior, and that this behavior (with respect to time) changes in fixed increments from diagonal to diagonal.  The components $g_{ij}$ along any diagonal have the property that the quantity $j-i$ is constant.  This means there should be $n-1$ ``different'' types of behavior.  

We make the ansatz that the components of the solution corresponding to the soliton are of the form
\[ g_{ij}(t) = a_{j-i} t^{1-2(j-i)/n}, \]
for some constants $a_{j-i}$ to be determined shortly.  Then the right side of (\ref{uni-RF}) becomes
\begin{align*}
& \sum_{1 \leq p < i} \frac{g_{pj}}{g_{pi}}
- \sum_{i < q < j} \frac{g_{ij}^2}{g_{iq} g_{qj}}
+ \sum_{j < r \leq n} \frac{g_{ir}}{g_{jr}} \\
&=
  \sum_{1 \leq p < i} \frac{a_{j-p} t^{1-2(j-p)/n}}{a_{i-p} t^{1-2(i-p)/n}}
- \sum_{i < q < j}    \frac{a_{j-i}^2 t^{2-4(j-i)/n}}{a_{q-i} t^{1-2(q-i)/n} a_{j-q} t^{1-2(j-q)/n}} \\
&\qquad \qquad 
+ \sum_{j < r \leq n} \frac{a_{r-i} t^{1-2(r-i)/n}}{a_{r-j} t^{1-2(r-j)/n}} \\
&=
  \sum_{1 \leq p < i} \frac{a_{j-p}}           {a_{i-p}} t^{-2(i-p)/n}
- \sum_{i < q < j}    \frac{a_{j-i}^2}{a_{q-i} a_{j-q}} t^{-2(j-q)/n}
+ \sum_{j < r \leq n} \frac{a_{r-i}}           {a_{r-j}} t^{-2(r-j)/n} \\
&= 
t^{-2(i-p)/2} \left(
  \sum_{1 \leq p < i} \frac{a_{j-p}}{a_{i-p}} 
- \sum_{i <  q < j} \frac{a_{j-i}^2}{a_{q-i} a_{j-q}}
+ \sum_{j < r \leq n} \frac{a_{r-i}}{a_{r-j}} \right).
\end{align*}
The left side is
\[ \frac{d}{dt} a_{j-i} t^{1-2(j-i)/2} = a_{j-i} \left( 1-\frac{2(j-i)}{n} \right) t^{-2(j-i)/2}. \]
The powers of $t$ cancel, and so we must find $a_{j-i}$ such that
\[ a_{j-i} \left( 1-\frac{2(j-i)}{n} \right) =  \sum_{1 \leq p < i} \frac{a_{j-p}}{a_{i-p}} 
- \sum_{i <  q < j} \frac{a_{j-i}^2}{a_{q-i} a_{j-q}}
+ \sum_{j < r \leq n} \frac{a_{r-i}}{a_{r-j}}. \]

For some $A > 0$, set 
\[ a_{j-i} = \frac{A^{j-i}}{n^{j-i-1}}. \]
Then the right side becomes
\begin{align*} 
& \sum_{1 \leq p < i} \frac{a_{j-p}}{a_{i-p}} 
- \sum_{i <  q < j} \frac{a_{j-i}^2}{a_{q-i} a_{j-q}}
+ \sum_{j < r \leq n} \frac{a_{r-i}}{a_{r-j}} \\
&=
  \sum_{1 \leq p < i} \frac{A^{j-p}}{n^{j-p-1}} \frac{n^{i-p-1}}{A^{i-p}} 
- \sum_{i <  q < j} \frac{A^{2j-2i}}{n^{2j-2i-2}} \frac{A^{q-i} A^{j-q}}{n^{q-i-1} n^{j-q-1}}
+ \sum_{j < r \leq n} \frac{A^{r-i}}{n^{r-i-1}} \frac{n^{r-j-1}}{A^{r-j}} \\
&=
  \sum_{1 \leq p < i} \frac{A^{j-i}}{n^{j-i}}
- \sum_{i <  q < j} \frac{A^{j-i}}{n^{j-i}}
+ \sum_{j < r \leq n} \frac{A^{j-i}}{n^{j-i}} \\
&= \left( \frac{A}{n} \right)^{j-i} \left(
  \sum_{1 \leq p < i} 1
- \sum_{i <  q < j} 1
+ \sum_{j < r \leq n} 1 \right) \\
&= \left( \frac{A}{n} \right)^{j-i} \big( n - 2(j-i) \big).
\end{align*}
The left side is
\[ a_{j-i} \left( 1-\frac{2(j-i)}{n} \right) = \frac{A^{j-i}}{n^{j-i-1}} \left( 1-\frac{2(j-i)}{n} \right) 
= \left( \frac{A}{n} \right)^{j-i} \left( n - 2(j-i) \right), \]
as desired.

This means
\[ g(t) = \frac{A^{j-i}}{n^{j-i-1}} t^{1-2(j-i)/n} \, \theta^{ij} \otimes \theta^{ij} \]
is a Ricci flow solution on $\UT_n \R$.  To see that $g(1)$ is a soliton, we need to find diffeomorphisms $\eta_t$ of $\UT_n \R$ such that
\[ g(t) = t \eta_t^* g(1) \]
is also a Ricci flow solution.  In something of a \textit{deus ex machina}, we claim that these diffeomorphisms are of the form
\[ (\eta_t(x))^{ij} = t^{-(j-i)/n} x^{ij}, \]
for $x \in \UT_n \R$.  Considering the coframe from Lemma \ref{uni-frame}, we see that
\begin{align*}
\eta_t^* \Theta_p^{ij}
&= \sum_{k=0}^p \sum_{i < r_1 < \cdots < r_k < p} (x^{i r_1} \circ \eta_t) (x^{r_1 r_2} \circ \eta_t) \cdots (x^{r_k p} \circ \eta_t) \\
&= \sum_{k=0}^p \sum_{i < r_1 < \cdots < r_k < p} t^{-(r_1-i)/n} x^{i r_1} t^{-(r_2-r_1)/n} x^{r_1 r_2} \cdots t^{-(p-r_k)/n} x^{r_k p} \\
&= \sum_{k=0}^p \sum_{i < r_1 < \cdots < r_k < p} t^{-(p-r_k+\cdots-r_1+r_1-i)/n} x^{i r_1} x^{r_1 r_2} \cdots x^{r_k p} \\
&= t^{-(p-i)/n} \Theta_p^{ij}, 
\end{align*}
and so
\begin{align*}
\eta_t^* \theta^{ij} 
&= d(x^{ij} \circ \eta_t) - \sum_{i<p<j} \left( 2(x^{ip} \circ \eta_t) - (\Theta^{ij}_p \circ \eta_t) \right) d(x^{pj} \circ \eta_t) \\
&= t^{-(j-i)/n} dx^{ij} - \sum_{i<p<j} \left( 2 t^{-(p-i)/n} x^{pi} - t^{-(p-i)/n} \Theta^{ij}_p \right) t^{-(j-p)/n} dx^{pj} \\
&= t^{-(j-i)/n} \theta^{ij}.
\end{align*}
Now we have
\begin{align*}
t \eta_t^* g(1)
&= t \eta_t^*(g(1)) \, \eta_t^* \theta^{ij} \otimes \eta_t^* \theta^{ij} \\
&= \frac{A^{j-i}}{n^{j-i-1}} t^{1-2(j-i)/n} \, \theta^{ij} \otimes \theta^{ij} \\
&= g(t)
\end{align*}
as required.  Thus, $g(t)$ is an expanding Ricci soliton with respect to the diffeomorphisms $\{\eta_t\}$ just described.

Set $A = 1$.  To conclude, we have another theorem.  

\begin{thm*}[\ref{thm:uni}]
Let $\UT_n \R$ be the Lie group of real $n \times n$ unitriangular matrices, with coordinates as in (\ref{uni-coords}) and coframe $\{\theta^{ij}\}$ as in Lemma \ref{uni-frame}.  Then the family of metrics $g(t) = g_{ij,ij}(t) \, \theta^{ij} \otimes \theta^{ij}$, where 
\[ g_{ij,ij}(t) = \frac{1}{n^{j-i-1}} t^{1-2(j-i)/n}, \]
is a Ricci flow solution on $\UT_n \R$.  The metric $g(1)$ is a nilsoliton with respect to the diffeomorphisms $\eta_t$, where
\[ (\eta_t(x))^{ij} = t^{-(j-i)/n} x^{ij}. \]
\end{thm*}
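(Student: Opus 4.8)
The plan is to verify the claimed solution directly against the reduced Ricci flow system, which I may take as the starting point since the frame of Lemma~\ref{uni-frame} has already been shown to be stably Ricci-diagonal. It therefore suffices to check that the scalar functions $g_{ij}(t)$ satisfy the ODE system \eqref{uni-RF}, and then to exhibit the soliton structure by producing an explicit family of diffeomorphisms.

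First I would impose the power-law ansatz
\[ g_{ij}(t) = a_{j-i}\, t^{1-2(j-i)/n}, \]
whose defining feature is that the time-exponent depends only on the ``diagonal distance'' $j-i$. This form is forced by a separation-of-variables requirement: on the left, $\tfrac{d}{dt}$ lowers the exponent to $-2(j-i)/n$, and on the right each ratio $g_{pj}/g_{pi}$, $g_{ij}^2/(g_{iq}g_{qj})$, and $g_{ir}/g_{jr}$ carries exactly the same power once the exponents telescope. Cancelling this common factor $t^{-2(j-i)/n}$ reduces the entire system to a finite family of purely algebraic equations in the constants $a_{j-i}$.

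Next I would solve those algebraic equations by the substitution $a_{j-i} = A^{j-i}/n^{j-i-1}$, with $A>0$ a free scaling parameter. The point of this choice is that every summand in each of the three sums collapses to the single value $(A/n)^{j-i}$, so the sums merely \emph{count} admissible indices---$i-1$ terms, $j-i-1$ terms, and $n-j$ terms respectively---yielding $(A/n)^{j-i}\big(n-2(j-i)\big)$. Comparing with the left side $a_{j-i}\big(1-2(j-i)/n\big)=(A/n)^{j-i}\big(n-2(j-i)\big)$ confirms the identity for every $A$, so $g(t)$ solves Ricci flow. To finish, I would establish the soliton relation $g(t)=t\,\eta_t^*g(1)$ using $(\eta_t(x))^{ij}=t^{-(j-i)/n}x^{ij}$; the crux is to show $\eta_t^*\theta^{ij}=t^{-(j-i)/n}\theta^{ij}$, which reduces to the scaling $\eta_t^*\Theta_p^{ij}=t^{-(p-i)/n}\Theta_p^{ij}$ of the auxiliary functions from Lemma~\ref{uni-frame}. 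This last identity holds because in each monomial $x^{ir_1}x^{r_1r_2}\cdots x^{r_kp}$ the individual exponents telescope to $-(p-i)/n$. Setting $A=1$ normalizes the constants to the stated metric.

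The hard part will be the \emph{discovery} rather than the verification. Because the asymptotics of an arbitrary diagonal solution are inaccessible here---unlike the Heisenberg case, the blowdown method of Section~\ref{sec-blowdown} gives no handle on them---there is no route to the soliton except to guess its form. Extrapolating from the low-dimensional cases suggests that components sharing a diagonal behave alike, which motivates the power-law ansatz, but the precise constant $a_{j-i}=A^{j-i}/n^{j-i-1}$ and the accompanying diffeomorphisms are not visible from the bare equations; once they are in hand, the telescoping of exponents renders both the ODE check and the coframe computation routine.
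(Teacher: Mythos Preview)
Your proposal is correct and follows essentially the same approach as the paper: the same power-law ansatz $g_{ij}(t)=a_{j-i}t^{1-2(j-i)/n}$, the same telescoping reduction to algebraic equations, the same substitution $a_{j-i}=A^{j-i}/n^{j-i-1}$ verified by the index-counting $(i-1)-(j-i-1)+(n-j)=n-2(j-i)$, and the same coframe computation $\eta_t^*\theta^{ij}=t^{-(j-i)/n}\theta^{ij}$ via telescoping in $\Theta_p^{ij}$. Your commentary on why the blowdown method is unavailable here also mirrors the paper's remarks.
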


\begin{rem}
It would appear that we have constructed a family of soliton metrics $\{g_A\}$ depending on the parameter $A$, but it is easy to see that there is a Lie algebra automorphism\footnote{An automorphism $\Phi$ acts on a left-invariant metric $g$ by $\Phi \cdot g = g(\Phi^{-1} \cdot, \Phi^{-1} \cdot)$.} $\Phi_A$, such that $g_1(t) = \Phi_A \cdot g_A(1)$, which means they are equivalent as required by Theorem 3.5 in \cite{Lauret2001}.  
\end{rem}

\appendix

\section{Curvature of Lie groups}\label{app-curv}

In this section, we recall some general facts about the geometry of Lie groups with left-invariant metrics, and derive the formula for the Ricci tensor that was used above.  

Let $\langle \cdot, \cdot \rangle$ be a left-invariant metric on a Lie group $G$, which is equivalent to an inner product on the Lie algebra $\mfr{g} = \Lie(G)$.  Let $\nabla$ denote the Levi-Civita connection for the metric, and Let $X,Y,Z,W \in \mfr{g}$.  Recall that $\ad_X = [X,\cdot]$, and its adjoint with respect to $\langle \cdot, \cdot \rangle$ is defined by
\[ \langle (\ad_X)^* Y, Z \rangle = \langle Y, \ad_X Z \rangle. \]

\begin{rem}
Formulas like those in the following propositions appear throughout the literature (e.g., \cite{Besse2008} and \cite{CheegerEbin2008}).  Most of these, however, are derived with the goal of expressing the various related curvatures with respect to a fixed orthonormal basis.  As we are working with evolving metrics, with no initial assumptions on orthonormality, it is more convenient to have curvature formulas that do not depend on an orthonormal basis.
\end{rem}

\begin{prop}\label{nab-rm-prop} We have the following formulas for $\nabla$ and the Riemannian curvature tensor:
\begin{enumerate}
\item[(a)] ${\displaystyle \nabla_X Y = \frac{1}{2} \big( \ad_X Y - (\ad_X)^* Y - (\ad_Y)^* X \big)},$
\smallskip
\item[(b)] $\langle R(X,Y)Z, W \rangle = \langle \nabla_X Z, \nabla_Y W \rangle - \langle \nabla_Y Z, \nabla_X W \rangle - \langle \nabla_{[X,Y]} Z, W \rangle.$
\end{enumerate}
\end{prop}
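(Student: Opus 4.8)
The plan is to prove both formulas by working entirely with left-invariant vector fields, where the crucial simplification is that the inner product of any two left-invariant fields is a constant function on $G$, so that every directional derivative of such an inner product vanishes.

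For part (a), I would start from the Koszul formula, which for arbitrary vector fields reads
$2\langle \nabla_X Y, Z\rangle = X\langle Y,Z\rangle + Y\langle X,Z\rangle - Z\langle X,Y\rangle + \langle [X,Y],Z\rangle - \langle [X,Z],Y\rangle - \langle [Y,Z],X\rangle$.
Since $X,Y,Z$ are left-invariant and $\langle\cdot,\cdot\rangle$ is left-invariant, each of the functions $\langle Y,Z\rangle$, $\langle X,Z\rangle$, $\langle X,Y\rangle$ is constant, so the first three (derivative) terms drop out. The remaining three bracket terms I would rewrite using $\ad_X = [X,\cdot]$ and the defining relation $\langle (\ad_X)^* Y, Z\rangle = \langle Y, \ad_X Z\rangle$: namely $\langle [X,Z],Y\rangle = \langle \ad_X Z, Y\rangle = \langle (\ad_X)^* Y, Z\rangle$, and similarly $\langle [Y,Z],X\rangle = \langle (\ad_Y)^* X, Z\rangle$, while $\langle [X,Y],Z\rangle = \langle \ad_X Y, Z\rangle$. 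Collecting everything gives $2\langle \nabla_X Y, Z\rangle = \langle \ad_X Y - (\ad_X)^*Y - (\ad_Y)^* X, Z\rangle$ for every $Z$, and non-degeneracy of the metric yields (a).

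For part (b), I would begin from the standard definition $R(X,Y)Z = \nabla_X \nabla_Y Z - \nabla_Y \nabla_X Z - \nabla_{[X,Y]}Z$, so that pairing with $W$ produces three terms. The goal is to convert the second-derivative terms into products of first derivatives via metric compatibility: $X\langle \nabla_Y Z, W\rangle = \langle \nabla_X \nabla_Y Z, W\rangle + \langle \nabla_Y Z, \nabla_X W\rangle$. Here I would invoke the fact (immediate from part (a)) that $\nabla_Y Z$ is again left-invariant, so $\langle \nabla_Y Z, W\rangle$ is constant and its $X$-derivative vanishes; hence $\langle \nabla_X \nabla_Y Z, W\rangle = -\langle \nabla_Y Z, \nabla_X W\rangle$, and symmetrically $\langle \nabla_Y \nabla_X Z, W\rangle = -\langle \nabla_X Z, \nabla_Y W\rangle$. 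Substituting both identities into the definition reproduces exactly the right-hand side of (b).

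The computations are routine, so the single point demanding care—and the step I would flag as the conceptual crux—is establishing that $\nabla$ preserves left-invariance, i.e. that $\nabla_Y Z$ is left-invariant whenever $Y,Z$ are. This is precisely what legitimizes discarding every derivative-of-a-constant term in both parts. It follows at once from part (a), since the expression there is an algebraic, pointwise combination of the left-invariant fields $\ad_X Y$, $(\ad_X)^* Y$, and $(\ad_Y)^* X$; thus proving (a) first is what makes the argument for (b) clean, and I would present them in that order.
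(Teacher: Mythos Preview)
Your argument is correct and is exactly the standard derivation one would expect here; the paper itself simply states that the result is standard and omits the proof entirely, so there is no alternative approach to compare against. Your only point of care---that $\nabla_Y Z$ is again left-invariant, which you deduce from part (a)---is precisely what is needed to justify dropping the derivative terms in (b), and your sign bookkeeping checks out.
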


This result is standard, so we omit the proof.  Now, the maps $\mapelts{(X,Y)}{\ad_X Y}$ and $\mapelts{(X,Y)}{(\ad_X)^* Y}$ are bilinear maps $\map{\mfr{g} \times \mfr{g}}{\mfr{g}}$.  Define 
\fndef{U}{\mfr{g} \times \mfr{g}}{\mfr{g}}{(X,Y)}{-\frac{1}{2} \Big( (\ad_X)^* Y + (\ad_Y)^* X \Big)}
This is symmetric, bilinear, and $U(X,X) = -(\ad_X)^* X$.  It is useful in computing the Riemannian curvature tensor, as we shall see.

\begin{prop}\label{rm40-prop} The Riemannian curvature $(4,0)$-tensor on $G$ is given by
\begin{align*}
&4 \langle R(X,Y)Z,W \rangle \\
&= 2 \langle [X,Y],[Z,W] \rangle + \langle [X,Z],[Y,W] \rangle - \langle [X,W],[Y,Z] \rangle \\
&\quad - \langle [[X,Y],Z],W \rangle + \langle [[X,Y],W],Z \rangle - \langle [[Z,W],X],Y \rangle + \langle [[Z,W],Y],X \rangle \\
&\quad + 4 \langle U(X,Z),U(Y,W) \rangle - 4 \langle U(X,W),U(Y,Z) \rangle.
\end{align*}
As a special case, 
\begin{align*}
\langle R(X,Y)Y,X \rangle 
&= \frac{1}{4} \| (\ad_X)^* Y + (\ad_Y)^* X \|^2 - \langle (\ad_X)^* X, (\ad_Y)^* Y \rangle \\
&\quad - \frac{3}{4} \| [X,Y] \|^2 - \frac{1}{2} \left\langle [[X,Y],Y],X \right\rangle - \frac{1}{2} \left\langle [[Y,X],X],Y \right\rangle,
\end{align*}
which is the sectional curvature $K(X \wedge Y)$ if $X$ and $Y$ are orthonormal.
\end{prop}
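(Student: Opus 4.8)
The plan is to substitute the connection formula from Proposition~\ref{nab-rm-prop}(a) into the curvature formula (b) and expand everything bilinearly, sorting the resulting terms into three types. First I would rewrite (a) compactly as $\nabla_X Y = \tfrac12 [X,Y] + U(X,Y)$, using the definition of $U$ together with $\ad_X Y = [X,Y]$. Plugging this into (b) and expanding each of the three inner products $\langle \nabla_X Z, \nabla_Y W\rangle$, $\langle \nabla_Y Z, \nabla_X W\rangle$, $\langle \nabla_{[X,Y]} Z, W\rangle$ produces (after multiplying through by $4$) terms of three kinds: pure \emph{bracket--bracket} terms, pure \emph{$U$--$U$} terms, and \emph{mixed} bracket--$U$ terms (the latter including the contribution of $\nabla_{[X,Y]}Z$).

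The two easy groups I would dispatch first. The $U$--$U$ terms are exactly $4\langle U(X,Z),U(Y,W)\rangle - 4\langle U(Y,Z),U(X,W)\rangle$, and since $U$ is symmetric this is the pair $4\langle U(X,Z),U(Y,W)\rangle - 4\langle U(X,W),U(Y,Z)\rangle$ appearing in the statement. The bracket--bracket terms collect to $\langle [X,Z],[Y,W]\rangle - \langle [Y,Z],[X,W]\rangle$, which is the pair $\langle [X,Z],[Y,W]\rangle - \langle [X,W],[Y,Z]\rangle$.

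The crux is the mixed terms. Here I would use the defining property of the adjoint, $\langle (\ad_A)^*B, C\rangle = \langle B, [A,C]\rangle$, to rewrite every remaining term --- each of which pairs a single bracket against a $U$, or pairs $U([X,Y],Z)$ against $W$ --- as a triple-bracket inner product of the form $\langle \cdot, [\cdot,[\cdot,\cdot]]\rangle$. After this rewriting, the Jacobi identity recombines these triple brackets; the terms reorganize into $2\langle[X,Y],[Z,W]\rangle$ together with the four triple-bracket terms $-\langle[[X,Y],Z],W\rangle + \langle[[X,Y],W],Z\rangle - \langle[[Z,W],X],Y\rangle + \langle[[Z,W],Y],X\rangle$ of the statement. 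This bookkeeping --- keeping track of signs and correctly applying Jacobi to cancel the spurious triple brackets --- is the main obstacle; the rest is mechanical.

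Finally, for the sectional-curvature special case I would set $Z = Y$ and $W = X$ in the general formula and simplify using $[X,X]=[Y,Y]=0$, the antisymmetry $[Y,X]=-[X,Y]$ (so that $\langle[X,Y],[Y,X]\rangle = -\|[X,Y]\|^2$ and the first three terms collapse to $-3\|[X,Y]\|^2$), the symmetry of $U$ (so that $4\langle U(X,Y),U(Y,X)\rangle = \|(\ad_X)^*Y + (\ad_Y)^*X\|^2$), and the identity $U(X,X) = -(\ad_X)^*X$ (so that $-4\langle U(X,X),U(Y,Y)\rangle = -4\langle(\ad_X)^*X,(\ad_Y)^*Y\rangle$); the four triple-bracket terms pair up into $-2\langle[[X,Y],Y],X\rangle - 2\langle[[Y,X],X],Y\rangle$. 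Dividing by $4$ then gives the stated expression for $\langle R(X,Y)Y,X\rangle$.
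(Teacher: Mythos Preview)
Your proposal is correct and follows precisely the approach the paper indicates: substitute Proposition~\ref{nab-rm-prop}(a) into (b), expand, and simplify via Lie bracket identities (the paper omits all details, merely calling the derivation ``straight-forward'' and noting that the sectional-curvature formula follows immediately from the first). Your organization into bracket--bracket, $U$--$U$, and mixed terms, together with the use of the adjoint relation and Jacobi identity for the mixed terms, is exactly the computation the paper is gesturing at.
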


The derivation of the first formula is straight-forward, relying mainly on Proposition \ref{nab-rm-prop} and various Lie bracket manipulations.  The second formula follows immediately from the first.

Let $\{e_i\}$ be a basis for $\mfr{g}$.  Then we write
\[ \ad_{e_i} e_j = c_{ij}^k e_k, \quad (\ad_{e_i})^* e_j = a_{ij}^k e_k, \quad \langle e_i, e_j \rangle = g_{ij}. \]
We can use this to write the above formulas in terms of components.

\begin{cor}
\begin{enumerate}
\item[(a)] If $\nabla_{e_i} e_j = \gamma_{ij}^k$, then
\[ \gamma_{ij}^k = \frac{1}{2} g^{kl} \big( c_{ij}^m g_{lm} - c_{il}^m g_{jm} - c_{jl}^m g_{im} \big);  \]
\item[(b)] The components of the Riemann curvature $(4,0)$-tensor satisfy
\begin{align}
4 R_{ijkl}
&= 2 c_{ij}^p c_{kl}^q g_{pq} + c_{ik}^p c_{jl}^q g_{pq} - c_{il}^p c_{jk}^q g_{pq} \nonumber \\
&\quad - c_{ij}^p c_{pk}^q g_{ql} + c_{ij}^p c_{pl}^q g_{qk} - c_{kl}^p c_{pi}^q g_{qj} + c_{kl}^p c_{pj}^q g_{qi} \nonumber  \\
&\quad + (a_{ik}^p + a_{ki}^p) (a_{jl}^q + a_{lj}^q) g_{pq} - (a_{il}^p + a_{li}^p) (a_{jk}^q + a_{kj}^q) g_{pq}. \label{rmcomp}
\end{align}
\item[(c)] The components of the Ricci curvature $(2,0)$-tensor satisfy
\begin{align}
4 R_{ij} 
&= \big( 2 c_{ki}^p c_{jm}^q g_{pq} + c_{kj}^p c_{im}^q g_{pq} - c_{km}^p c_{ij}^q g_{pq} \nonumber \\
&\quad - c_{ki}^p c_{pj}^q g_{qm} + c_{ki}^p c_{pm}^q g_{qj} - c_{jm}^p c_{pk}^q g_{qi} + c_{jm}^p c_{pi}^q g_{qk} \nonumber \\
&\quad + (a_{kj}^p + a_{jk}^p) (a_{im}^q + a_{mi}^q) g_{pq} - (a_{km}^p + a_{mk}^p) (a_{ij}^q + a_{ji}^q) g_{pq} \big) g^{km} . \label{riccomp}
\end{align}
\item[(d)] The sectional curvature $K(e_i \wedge e_j)$ satisfies
\begin{align*}
4 K_{ij} 
&= \Big(3 c_{ij}^p c_{ji}^q g_{pq} - c_{ij}^p c_{pj}^q g_{qi} + c_{ij}^p c_{pi}^q g_{qj} - c_{ji}^p c_{pi}^q g_{qj} + c_{ji}^p c_{pj}^q g_{qi} \\
&\quad + (a_{ij}^p + a_{ji}^p) (a_{ji}^q + a_{ij}^q) g_{pq} - (a_{ii}^p + a_{ii}^p) (a_{jj}^q + a_{jj}^q) g_{pq}\Big)/(g_{ii} g_{jj}-g_{ij}^2). 
\end{align*}
\item[(e)] The scalar curvature satisfies
\begin{align*}
4 S  
&= \big( 2 c_{ki}^p c_{jm}^q g_{pq} + c_{kj}^p c_{im}^q g_{pq} - c_{km}^p c_{ij}^q g_{pq} \\
&\quad - c_{ki}^p c_{pj}^q g_{qm} + c_{ki}^p c_{pm}^q g_{qj} - c_{jm}^p c_{pk}^q g_{qi} + c_{jm}^p c_{pi}^q g_{qk} \\
&\quad + (a_{kj}^p + a_{jk}^p) (a_{im}^q + a_{mi}^q) g_{pq} - (a_{km}^p + a_{mk}^p) (a_{ij}^q + a_{ji}^q) g_{pq} \big) g^{ij} g^{km}.
\end{align*}
\end{enumerate}
\end{cor}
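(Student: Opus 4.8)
The plan is to prove all five parts by expanding the coordinate-free identities of Propositions \ref{nab-rm-prop} and \ref{rm40-prop} in the chosen basis $\{e_i\}$, thereby reducing everything to algebra involving the structure constants $c_{ij}^k$, the adjoint constants $a_{ij}^k$, and the metric components $g_{ij}$. The one preliminary I need is the relation between the $a$'s and the $c$'s. Pairing the defining identity $\langle (\ad_{e_i})^* e_j, e_l \rangle = \langle e_j, \ad_{e_i} e_l \rangle$ against each basis vector and using $\ad_{e_i} e_l = c_{il}^m e_m$ gives $a_{ij}^k g_{kl} = c_{il}^m g_{jm}$, hence $a_{ij}^k = g^{kl} c_{il}^m g_{jm}$. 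This lets me convert freely between the two families of constants.

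For part (a) I would write $\nabla_{e_i} e_j = \gamma_{ij}^k e_k$ and substitute into Proposition \ref{nab-rm-prop}(a), obtaining $\gamma_{ij}^k e_k = \tfrac12\big(\ad_{e_i} e_j - (\ad_{e_i})^* e_j - (\ad_{e_j})^* e_i\big) = \tfrac12(c_{ij}^k - a_{ij}^k - a_{ji}^k)e_k$. Replacing the two adjoint constants by the preliminary relation and pulling out a common factor $g^{kl}$ (using $c_{ij}^k = g^{kl} c_{ij}^m g_{lm}$) yields the stated expression.

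Part (b) is the computational heart of the corollary, and the step I expect to be the main obstacle. Here I would evaluate each term of the $(4,0)$ formula of Proposition \ref{rm40-prop} on $X=e_i,\, Y=e_j,\, Z=e_k,\, W=e_l$. The inner products of single brackets become products of two structure constants contracted with the metric, e.g. $\langle [e_i,e_j],[e_k,e_l]\rangle = c_{ij}^p c_{kl}^q g_{pq}$; the iterated-bracket terms expand as $\langle [[e_i,e_j],e_k],e_l\rangle = c_{ij}^p c_{pk}^q g_{ql}$ and similarly for the remaining three; and the $U$-terms, using $U(e_i,e_k) = -\tfrac12(a_{ik}^p + a_{ki}^p)e_p$, contribute $4\langle U(e_i,e_k),U(e_j,e_l)\rangle = (a_{ik}^p + a_{ki}^p)(a_{jl}^q + a_{lj}^q)g_{pq}$. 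Assembling these with the correct signs reproduces (\ref{rmcomp}). The difficulty is purely bookkeeping: keeping the index order and signs of the bracket terms straight, since a single transposition inside an iterated bracket flips a sign.

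Finally, parts (c), (d), and (e) follow from (b) by contraction. For the Ricci tensor I would use $R_{ij} = g^{km} R_{kijm}$; substituting the correspondingly relabeled form of (\ref{rmcomp}) and contracting with $g^{km}$ gives (\ref{riccomp}) at once. For the sectional curvature I would specialize (b) to $R_{ijji}$ (equivalently, expand the special case of Proposition \ref{rm40-prop}), noting that $c_{ii}^p = 0$ collapses one term and the two leading terms combine into the factor $3$, and then divide by the Gram determinant $g_{ii}g_{jj} - g_{ij}^2$, which is the squared area of the parallelogram spanned by $e_i$ and $e_j$. For the scalar curvature I would take the full trace $S = g^{ij} R_{ij}$, so that $4S = g^{ij} g^{km}(\cdots)$ with the bracketed expression exactly that of (c). None of these contractions presents any difficulty beyond careful index substitution.
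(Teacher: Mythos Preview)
Your proposal is correct and follows exactly the approach the paper intends: the Corollary is stated immediately after introducing the basis notation with the remark that one ``can use this to write the above formulas in terms of components,'' and your term-by-term expansion of Propositions~\ref{nab-rm-prop} and~\ref{rm40-prop}, together with the contraction steps for (c)--(e), is precisely that. The preliminary identity you derive is the paper's equation~(\ref{adj-const}), and your index substitutions and sign checks for the bracket and $U$-terms are accurate.
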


We finally note that the ``adjoint structure constants'' $a_{ij}^k$ can be expressed in terms of $c_{ij}^k$ and $g_{ij}$, by using the definition of $\ad^*$:
\begin{equation}\label{adj-const}
a_{ij}^k = c_{il}^m g_{jm} g^{kl}.
\end{equation}
This formula makes it possible to eliminate the $a_{ij}^k$ from the curvature formulas.


\bibliography{../refs}

\end{document}